\newtheorem{thm}{Theorem}[section]
\newtheorem{theorem}[thm]{Theorem}
\newtheorem{corollary}[thm]{Corollary}
\newtheorem{lemma}[thm]{Lemma}
\newtheorem{proposition}[thm]{Proposition}
\newtheorem{definition}[thm]{Definition}
\theoremstyle{remark}
\newtheorem{remark}[thm]{Remark}
\newcommand{\ZZ}{\mathbb Z}
\newcommand{\spann}{\mbox{\rm span}}
\newcommand{\abs}[1]{\left| #1 \right|}
\title{An Elementary, Illustrative Proof of the Rado-Horn Theorem}
\author[P.G. Casazza, and J. Peterson
 ]{Peter G. Casazza and Jesse Peterson}
\address{Department of Mathematics, University
of Missouri, Columbia, MO 65211-4100}
\thanks{The first author is supported by NSF DMS 1008183, NSF ATD1042701
and AFOSR F1ATA00183G003}
\email{casazzap@missouri.edu;}
\begin{document}

\begin{abstract}
The Rado-Horn theorem provides necessary and sufficient conditions for 
when a collection of vectors can be partitioned into a fixed number of 
linearly independent sets.  Such partitions exist if and only if every
 subset of the vectors satisfies the so-called Rado-Horn inequality.  
Today there are at least six proofs of the Rado-Horn theorem, but these 
tend to be extremely delicate or require intimate knowledge of matroid theory.  In this paper we provide an elementary proof of the Rado-Horn theorem as well as elementary proofs for several generalizations including results for the redundant case when the hypotheses of the Rado-Horn theorem fail.  Another problem with the existing proofs of the
Rado-Horn Theorem is that they give no information about how to actually 
partition the vectors.
We start by considering a specific partition of the vectors, and the proof consists 
of showing
that this is an optimal partition. 
We further show how certain structures we construct in the proof are at 
the heart of the Rado-Horn theorem by characterizing subsets of vectors 
which maximize the Rado-Horn inequality. 
Lastly, we demonsrate how these results may be used to select an optimal partition 
with respect to spanning properties of the vectors.
 \end{abstract}

\maketitle

%%%%%%%%%%%%%%%%%%%%%%%%%%%%%%%%%%%%%%%%%%%%%%%%%%%%%%%%%%%%%%%%%%%%%%%%%%%%%%%%%%%%%%%%%%%%%%%%%%%%%%%%%%%%%%%%%%%%%%%%%%%%%%%
%%%%%%%%%%%%%%%%%%%%%%%%%%%%%%%%%%%%%%%%%%%%%%%%%%%%%%%%%%%%%%%%%%%%%%%%%%%%%%%%%%%%%%%%%%%%%%%%%%%%%%%%%%%%%%%%%%%%%%%%%%%%%%%

\section{Introduction}
The terminology {\it Rado-Horn theorem} was first introduced in \cite{bourgain}.  This theorem \cite{horn, rado} provides necessary and sufficient conditions for a collection of vectors to be partitioned into $k$ linearly independent sets:
\begin{theorem} (Rado-Horn)
Consider the vectors $\Phi=\{\varphi_i\}_{i=1}^M$ in a vector space.  
Then the follwing are equivalent.
\begin{enumerate}[(i)]
	\item The set $\Phi$ can be partitioned into sets $\{A_i\}_{i=1}^k$ 
such that $A_i$ is a linearly independent set for all $i=1,2,\ldots,k$.
	\item For any subset $J \subseteq \Phi$, we have 
$\abs{J}/\dim \spann (J) \leq k$.
\end{enumerate}
\end{theorem}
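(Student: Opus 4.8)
The plan is to settle (i) $\Rightarrow$ (ii) by a one‑line count and (ii) $\Rightarrow$ (i) by exhibiting an explicit partition and showing that if it is not a decomposition into linearly independent sets, then (ii) must fail. For (i) $\Rightarrow$ (ii): if $\Phi = A_1 \cup \cdots \cup A_k$ with each $A_i$ linearly independent and $J \subseteq \Phi$ is arbitrary, then $J$ is the disjoint union of the sets $J \cap A_i$, each linearly independent, so $\abs{J \cap A_i} = \dim\spann(J \cap A_i) \le \dim\spann(J)$; summing over $i = 1, \ldots, k$ gives $\abs{J} \le k\,\dim\spann(J)$.

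For (ii) $\Rightarrow$ (i), fix a partition $\Phi = A_1 \cup \cdots \cup A_k$ minimizing the total excess $e := \sum_{i=1}^{k}\bigl(\abs{A_i} - \dim\spann(A_i)\bigr)$ (equivalently, maximizing $\sum_i \dim\spann(A_i)$); one exists since there are finitely many partitions, and $e = 0$ exactly when every $A_i$ is linearly independent, so it suffices to derive $e = 0$ from (ii). Suppose $e > 0$; after reindexing, $A_1$ contains a circuit $C$ (a minimal linearly dependent subset). Each $v \in C$ is redundant in $A_1$, so deleting it does not change $\dim\spann(A_1)$; consequently, moving $v$ from $A_1$ to any other bin $A_j$ alters $e$ by $-\bigl(\dim\spann(A_j \cup \{v\}) - \dim\spann(A_j)\bigr)$, and minimality of $e$ forces this quantity to vanish, i.e.\ $v \in \spann(A_j)$ for every $j$. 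Hence $C \subseteq \bigcap_j \spann(A_j)$.

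The heart of the proof is a closure construction. Fix a maximal linearly independent subset $I_i \subseteq A_i$ for each $i$, and let $R \subseteq \Phi$ be the smallest set containing $C$ such that, whenever $v \in R$ lies in $\spann(I_i) \setminus I_i$, the unique circuit contained in $I_i \cup \{v\}$ also lies in $R$; this $R$ is obtained by iterating the adjunction, which terminates since $\Phi$ is finite. Put $F := \spann(R)$ and $J := \Phi \cap F$, so that $R \subseteq J$ gives $\spann(J) = F$, while $C \subseteq A_1 \cap F$ shows $A_1 \cap F$ is linearly dependent. The \textbf{main obstacle}, and the only genuinely nontrivial step, is the structural claim that $I_i \cap F$ spans $F$ for \emph{every} $i$: here the minimality of $e$ is used in full, since each fundamental‑circuit adjunction corresponds to a within‑bin circuit exchange that leaves $e$ unchanged, and were some $I_i \cap F$ to fail to span $F$ one could follow the chain of exchanges that brought in the missing vector to expose a move strictly decreasing $e$ --- an ``augmenting chain'' --- which is impossible; making this rigorous is the delicate part. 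Granting the claim, $A_i \cap F \supseteq I_i \cap F$ spans $F$ and hence has at least $\dim F$ elements, while $A_1 \cap F$, being both spanning and linearly dependent, has at least $\dim F + 1$; therefore
\[
\abs{J} \;=\; \sum_{i=1}^{k}\abs{A_i \cap F} \;\ge\; (\dim F + 1) + (k-1)\dim F \;=\; k\,\dim F + 1 \;>\; k\,\dim\spann(J),
\]
contradicting (ii). Thus $e = 0$ and the chosen partition realizes (i). Finally, tracing through the construction shows that the flats $F$ it produces are precisely the subsets on which the inequality in (ii) is attained with equality, which gives the extremal characterization promised in the abstract.
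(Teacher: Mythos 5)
Your (i)~$\Rightarrow$~(ii) direction is fine and matches the paper's. Your reduction for (ii)~$\Rightarrow$~(i) is also sound as far as it goes: choosing a partition minimizing the total excess $e=\sum_i(\abs{A_i}-\dim\spann(A_i))$ and using minimality to show that every vector of a circuit $C\subseteq A_1$ lies in $\bigcap_j\spann(A_j)$ is a correct computation (the move changes $e$ by exactly $-\delta$ with $\delta=\dim\spann(A_j\cup\{v\})-\dim\spann(A_j)\in\{0,1\}$, so minimality forces $\delta=0$). The final count, \emph{granting} your structural claim, is also correct. The overall strategy is the same as the paper's: pick an extremal partition (the paper uses a lexicographically maximal ``fundamental partition'' rather than minimal excess), locate subsets $S_i\subseteq A_i$ with a common span $F$ in every block (the paper's ``transversal''), adjoin one extra dependent vector, and read off a violation of the Rado--Horn inequality on $J=\Phi\cap F$.

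However, there is a genuine gap, and you have named it yourself: the claim that $I_i\cap F$ spans $F$ for every $i$ is asserted, motivated by an ``augmenting chain'' heuristic, and then explicitly deferred (``making this rigorous is the delicate part''). This claim is not a technicality --- it \emph{is} the theorem. Note that it requires in particular $F\subseteq\spann(A_i)$ for all $i$, which your closure construction does not obviously guarantee: $R$ is only closed under adjoining fundamental circuits of those $v\in R$ that already lie in $\spann(I_i)$, so nothing in the construction as written prevents $R$ from acquiring a vector outside $\spann(I_i)$ for some $i$, in which case $I_i\cap F$ cannot span $F$ and your final count collapses. Making the augmenting-chain argument precise is exactly the content of the paper's Proposition~\ref{switch}, Lemma~\ref{down}, Corollary~\ref{cor-down} and Corollary~\ref{transversal}: one must show that if some block fails to ``see'' a vector of the closure, a finite chain of single-vector exchanges produces a new partition of the same shape in which that block gains an independent vector, contradicting extremality --- and one must verify that the exchanges can be performed consistently (the paper does this by tracking minimal spanning subsets $S_i^{(n)}$ and choosing the exchange partners with minimal index). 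Until you supply this argument, the proof is incomplete at its central step.
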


The Rado-Horn theorem has found application in several areas 
including progress on the Feichtinger conjecture \cite{CL}, a 
characterization of Sidon sets in $\Pi_{k=1}^{\infty}\ZZ_p$ 
\cite{MM, P}, and a notion of redundancy for finite frames \cite{BCK}.  
A generalized version of the Rado-Horn theorem has found use in frame 
theory as well where redundancy is at the heart of the subject \cite{BCPS}.  

Unfortunatly, proving the Rado-Horn theorem tends to very intricate or 
complicated.  Pisier, when discussing a characterization of Sidon sets 
in $\Pi_{k=1}^{\infty}\ZZ_p$ states
 ``$\ldots$ d'un lemme d'al\'{e}bre d\^{u} \`{a} Rado-Horn dont 
la d\'{e}monstration est relativement d\'{e}licate'' \cite{P}.  
Today there are at least six proofs of the Rado-Horn theorem 
\cite{CKS, CL, edmonds, HW, horn, rado}.  The theorem was proved 
in a more general algebraic setting in \cite{horn, rado} and then 
for matroids in \cite{edmonds}.  Each of these proofs are extremely 
delicate.  Harary and Welsh \cite{HW} improved upon the matroid version 
of the Rado-Horn theorem with a short and elegant proof; however, their
 argument requires a development of certain deep structures within
 matroid theory.  The Rado-Horn theorem was generalized in \cite{CKS}
 to include partitions of a collection of vectors with subsets of 
specified sizes removed, and the authors also proved results for the 
redundant case - the case where a collection of vectors cannot be 
partitioned into $k$ linear independent sets.  Unfortuneately the 
proofs for these refinements to the theorem are even more delicate 
than the original.  Finally, the Rado-Horn theorem was rediscovered 
in \cite{CL}, where the authors give an induction proof which may be 
considered elementary.  However, the proof has some limitations as it
 does not clearly generalize or describe the redundant case; it does 
not reveal the origin of the Rado-Horn inequality.  

In this paper, we present a elementary proof which is at the core of the Rado-Horn theorem.  With slight modification, these simple arguments prove a generalization of the Rado-Horn theorem and provide results for the redundant case similar to those in \cite{CKS}.  Perhaps most significanlty, the arguements we present may be thought of visually and provide insight into the specific conditions which give rise to the inequality in the Rado-Horn theorem.

This paper is organized into three sections.  The first develops constructions and main arguments used througout the paper.  The second section uses these tools to prove the Rado-Horn theorem and its generalization.  The final section describes which subsets maximise the Rado-Horn inequality and how this may be used to concretely construct a so-called fundamental partition.
%%%%%%%%%%%%%%%%%%%%%%%%%%%%%%%%%%%%%%%%%%%%%%%%%%%%%%%%%%%%%%%%%%%%%%%%%%%%%%%%%%%%%%%%%%%%%%%%%%%%%%%%%%%%%%%%%%%%%%%%%%%%%%%
%%%%%%%%%%%%%%%%%%%%%%%%%%%%%%%%%%%%%%%%%%%%%%%%%%%%%%%%%%%%%%%%%%%%%%%%%%%%%%%%%%%%%%%%%%%%%%%%%%%%%%%%%%%%%%%%%%%%%%%%%%%%%%%

\section{Preliminary Results}

Given a set of vectors, $\Phi=\{\varphi_i\}_{i=1}^M$, a main construction for our proofs will be a partition $F=\{F_i\}_{i=1}^{\ell}$ of $\Phi$ which is optimal in the sense that we have as many spanning sets as possible.  Then given this number of spanning sets, we have the partition also with the maximum number of sets spanning codimension one as possible.  This property continues through the entire partition of vectors.  We give a more formal definition below. 

\begin{definition}
Given vectors $\Phi=\{\varphi_i\}_{i=1}^M$, we call a partition $\{A_i\}_{i=1}^k$ of $\Phi$ an {\bf ordered partition} if $\abs{A_i}\geq \abs{A_{i+1}}$ for all $i=1,\ldots,k-1$.
\end{definition}

\begin{definition}
Given vectors $\Phi=\{\varphi_i\}_{i=1}^M$, let $\{P_i\}_{i=1}^m$ be all possible ordered partitions of $\Phi$ into linearly independent sets. Let $A_{ij}$ denote the $j$th set in the $i$th partition so that $P_i=\{A_{ij}\}_{i=1,j=1}^{m,r_i}$.
Now define
\[
	a_1=\sup_{i=1,\ldots,m}\abs{A_{i1}}.
\]
Then consider only the partitions $\{P_i : \abs{A_{i1}}=a_1\}$ and define
\[
		a_2=\sup_{\left\{i: \abs{A_{i1}}=a_1\right\}}\abs{A_{i2}}.
\]
We continue to consider fewer and fewer partitions so that given $a_1,\ldots,a_n$,
\[
	a_{n+1}=\sup_{\left\{i: \abs{A_{i1}}=a_1,\ldots,A_{in}=a_{n}\right\}}\abs{A_{i(n+1)}}.
\]
When $\sum_{i=1}^{\ell} a_i=M$, any remaining partition is in the set $\{P_i:\abs{A_{i1}}=a_1,\dots, \abs{A_{i\ell}}=a_{\ell}\}$. We call any such ordered partition of $\Phi$ into linearly indpendent sets $F=\{F_i\}_{i=1}^{\ell}$ a {\bf fundamental partition}.
\end{definition}

We define a fundamental partition in this manner simply because this definition makes existence clear.  However, the following theorem gives an equivalent definition and is Theorem 1 from \cite{colorings}.

\begin{theorem}
Let $\Phi=\{\varphi_i\}_{i=1}^M$ be a collection of vectors.  Then $F=\{F_i\}_{i=1}^{\ell}$ is a fundamental partition if and only if for any other ordered partition $\{P_i\}_{i=1}^k$ of $\Phi$ into linearly independent sets,
\begin{enumerate}[(i)]
	\item $\ell \leq k$
	\item	$\sum_{i=1}^j \abs{P_i} \leq \sum_{i=1}^j\abs{F_i}, j=1,2,\ldots,\ell$.
\end{enumerate}
\end{theorem}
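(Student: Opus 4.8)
The plan is to reduce both implications to one fact about fundamental partitions: they have the largest possible ``prefix sums'' among all ordered partitions of $\Phi$ into linearly independent sets. For $t\ge 0$ write $\rho_t$ for the maximum of $|Q_1|+\cdots+|Q_t|$ over all choices of pairwise disjoint linearly independent subsets $Q_1,\dots,Q_t\subseteq\Phi$ (so $\rho_0=0$); note that for any ordered partition $\{P_i\}$ into independent sets and any $j$ we have $\sum_{i=1}^{j}|P_i|\le\rho_j$, simply because $P_1,\dots,P_j$ are disjoint independent sets. I claim that a fundamental partition $F=\{F_i\}_{i=1}^{\ell}$ satisfies $\sum_{i=1}^{j}|F_i|=\rho_j$ for every $j\le\ell$. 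Granting this, (ii) is immediate. And (i) follows too: if some ordered partition $\{P_i\}_{i=1}^{k}$ had $k<\ell$, then $P_1\cup\cdots\cup P_k=\Phi$ would give $\rho_k=M$, hence $\sum_{i=1}^{k}|F_i|=\rho_k=M$, contradicting that $F_{k+1},\dots,F_\ell$ are all nonempty.

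With the claim in hand, the converse implication is essentially free. A fundamental partition $F^{*}$ exists (that is the purpose of the preceding Definition), and by the forward direction it satisfies (i) and (ii). So if $F$ is any ordered partition into independent sets satisfying (i) and (ii), then applying (i) and (ii) to the pair $(F,F^{*})$ and again to the pair $(F^{*},F)$ forces $\ell=\ell^{*}$ together with $\sum_{i=1}^{j}|F_i|=\sum_{i=1}^{j}|F^{*}_i|$ for all $j\le\ell$; telescoping gives $|F_i|=|F^{*}_i|=a_i$ for every $i$, so $F$ lies in the family of partitions singled out in the Definition and is therefore fundamental.

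It remains to prove the claim, which I would do by establishing $a_j=\rho_j-\rho_{j-1}$ for all $j$, by induction on $j$. The case $j=1$ is just that $a_1$ is the largest size of a linearly independent subset of $\Phi$, namely $\rho_1=\dim\spann(\Phi)$. For the step, assume $a_i=\rho_i-\rho_{i-1}$ for $i<j$, so that $\sum_{i<j}a_i=\rho_{j-1}$. The inequality $a_j\le\rho_j-\sum_{i<j}a_i=\rho_j-\rho_{j-1}$ is immediate, since the first $j$ parts of any ordered partition are disjoint and independent. For the reverse inequality it suffices to exhibit an ordered partition $G$ of $\Phi$ into independent sets with $\sum_{i\le t}|G_i|=\rho_t$ for every $t$ (equivalently, with $i$th part of size $\rho_i-\rho_{i-1}$): by the inductive hypothesis the first $j-1$ parts of $G$ then have sizes $a_1,\dots,a_{j-1}$, whence $a_j\ge|G_j|=\rho_j-\rho_{j-1}$. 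Once $a_j=\rho_j-\rho_{j-1}$ is known for all $j$, a fundamental partition $F$ has $|F_i|=a_i$ and so $\sum_{i\le j}|F_i|=\rho_j$, proving the claim.

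Thus everything comes down to producing the ordered partition $G$ above, and this is the step I expect to be the main obstacle. Its existence is the linear-matroid case of Edmonds' matroid partition theorem together with the ``principal partition'' refinement of the rank sequence $\rho_\bullet$; in particular it already requires the concavity of $t\mapsto\rho_t$ (so that the sizes $\rho_i-\rho_{i-1}$ are non-increasing and $G$ can be listed in decreasing order of size), which one gets for free from the matroid-union rank formula $\rho_t=\min_{T\subseteq\Phi}\bigl(M-|T|+t\,\rk(T)\bigr)$, a minimum of affine functions of $t$. A self-contained construction of $G$ would proceed by a matroid-union exchange argument: peel off a maximal independent subset as the first class, verify via the Rado--Horn inequality that what remains is a union of one fewer independent sets, and recurse; the delicate point is that a badly chosen maximal independent set can leave a remainder that is \emph{not} such a union, so one must show (or arrange) that a suitable choice is always possible. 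This is the one place where I would either cite \cite{colorings} (or \cite{edmonds}) or carry out the exchange lemma in full.
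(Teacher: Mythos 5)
Your reduction is sound as far as it goes: introducing $\rho_t$ (the maximum total size of $t$ pairwise disjoint linearly independent subsets of $\Phi$), showing that $a_j=\rho_j-\rho_{j-1}$ would give both (i) and (ii), and the converse argument that plays a partition satisfying (i)--(ii) against a known fundamental partition $F^{*}$ to force $\abs{F_i}=a_i$ are all correct pieces of bookkeeping. But the step you defer --- the existence of a single ordered partition $G$ achieving $\sum_{i\le t}\abs{G_i}=\rho_t$ for \emph{every} $t$ simultaneously, together with the concavity of $t\mapsto\rho_t$ needed to list $G$ in non-increasing order --- is not a peripheral technicality; it is the entire content of the theorem (it is exactly the assertion that lexicographic maximization of $(\abs{F_1},\abs{F_2},\dots)$ coincides with simultaneous maximization of all prefix sums). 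As a self-contained proof, the proposal therefore has a genuine gap, located precisely where you predicted it would be.

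You should know, however, that the paper does not prove this statement either: it is quoted as Theorem 1 of \cite{colorings} and used only as an equivalent reformulation of the definition; none of the later results in the paper depend on it. So your plan to cite \cite{edmonds} or \cite{colorings} at exactly this point reproduces the paper's own treatment, and your reduction is, if anything, more explicit than what the paper offers. If you wanted to close the gap with tools native to the paper rather than importing matroid union, the natural device is the exchange step of Proposition \ref{switch} (as deployed in Lemma \ref{down} and Corollary \ref{cor-down}): assuming a competitor $P$ beats the fundamental partition $F$ on some prefix sum $\sum_{i\le j}$, one must produce a chain of exchanges that enlarges an earlier block of $F$ without shrinking any block before it, contradicting lexicographic maximality. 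Carrying that out carefully is essentially re-proving the matroid partition theorem, which is presumably why the authors chose to cite it.
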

That is, an ordered partition of $\Phi$ is a fundamental partition if and only if it {\it majorizes} every other ordered partition of $\Phi$ into linearly independent sets.  

It is helpful to view a fundamental partition as a Young diagram where each square represents a vector, and the rows correspond to the sets of the partition.  See 
figure \ref{fig1}.

%\begin{figure}[ht!] 
%   \center{\includegraphics*[scale = 0.7]{partition.eps}}
%   \caption{Example of a fundamental partition}
%   \label{fig1}
%\end{figure}

\begin{figure}[ht!] 
   \center{\includegraphics*[scale = 0.7]{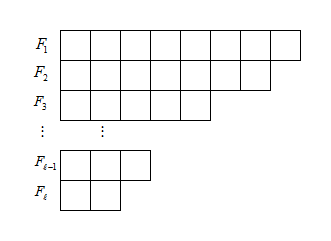}}
   \caption{Example of a fundamental partition}
  \label{fig1}
\end{figure}

We will occasionally need to move vectors within a fundamental partition.  If we do this carefully, the linear independence properties and the subspaces spanned by many of the sets in the partition will remain unchanged.

\begin{proposition}\label{switch}
Let $\Phi=\{\varphi_i\}_{i=1}^{M}$ be a collection of linearly independent vectors.  Suppose $\psi \in \spann{}(\Phi)$ so that $\psi = \sum_{i=1}^{M} c_i\varphi_i$.  Then for any $j\in \{1,\ldots, M\}$ such that $c_j \neq 0$, $\Psi_j=\{\Phi, \psi\} \setminus \{\varphi_j\}$ is linearly independent and $\spann{}(\Psi_j)=\spann{}(\Phi)$.
\end{proposition}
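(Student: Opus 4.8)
\emph{Proof proposal.} The plan is to recognize this as the classical basis-exchange lemma and to verify the two assertions by short direct arguments. First I would prove the span equality. Since $\psi=\sum_{i=1}^{M}c_i\varphi_i$ and $c_j\neq 0$, I can solve for $\varphi_j$:
\[
\varphi_j=\frac{1}{c_j}\Bigl(\psi-\sum_{i\neq j}c_i\varphi_i\Bigr).
\]
This exhibits $\varphi_j$ as a linear combination of the vectors in $\Psi_j$, so $\varphi_j\in\spann{}(\Psi_j)$; since every $\varphi_i$ with $i\neq j$ lies in $\Psi_j$ as well, we get $\spann{}(\Phi)\subseteq\spann{}(\Psi_j)$. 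Conversely, $\psi\in\spann{}(\Phi)$ by hypothesis and each $\varphi_i\in\spann{}(\Phi)$, so $\Psi_j\subseteq\spann{}(\Phi)$ and hence $\spann{}(\Psi_j)\subseteq\spann{}(\Phi)$. The two inclusions give $\spann{}(\Psi_j)=\spann{}(\Phi)$.

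For linear independence I would argue directly. Suppose $d\,\psi+\sum_{i\neq j}d_i\varphi_i=0$ for scalars $d,d_i$. Substituting $\psi=\sum_{i=1}^{M}c_i\varphi_i$ and collecting the coefficient of each $\varphi_i$ yields
\[
d c_j\varphi_j+\sum_{i\neq j}(d_i+d c_i)\varphi_i=0.
\]
Because $\Phi$ is linearly independent, every coefficient vanishes; in particular $d c_j=0$, and since $c_j\neq 0$ this forces $d=0$, whence $d_i=0$ for all $i\neq j$. Thus $\Psi_j$ is linearly independent. (Alternatively one could observe that $\Psi_j$ has exactly $M$ vectors and spans the $M$-dimensional space $\spann{}(\Phi)$, hence is a basis; I prefer the direct computation, which avoids dimension counts and works verbatim in the infinite-dimensional setting.)

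I do not anticipate any genuine obstacle here. The only two points needing a moment's care are: first, that replacing $\varphi_j$ by $\psi$ really does produce a family of $M$ \emph{distinct} vectors, which follows because the coefficient of $\varphi_j$ in the (unique) expansion of $\psi$ is nonzero while it is zero in the expansion of every $\varphi_i$, $i\neq j$; and second, that the hypothesis $c_j\neq 0$ is invoked exactly where needed — once to isolate $\varphi_j$ in the span argument, and once to conclude $d=0$ in the independence argument. Both uses are immediate.
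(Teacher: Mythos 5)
Your proposal is correct and follows essentially the same route as the paper: the linear independence argument (substitute the expansion of $\psi$, collect coefficients, use $c_j\neq 0$ to kill the coefficient of $\psi$) is identical to the paper's. The only cosmetic difference is in the span equality, where the paper deduces $\spann{}(\Psi_j)=\spann{}(\Phi)$ from $\spann{}(\Psi_j)\subseteq\spann{}(\Phi)$ together with $\abs{\Psi_j}=\abs{\Phi}$, whereas you solve explicitly for $\varphi_j$ to get the reverse inclusion directly.
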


\begin{proof}
Suppose for some scalars $a_i$,
\begin{align*}
	\sum_{i=1,i\neq j}^M a_i\varphi_i + a_j\psi 	&= \sum_{i=1,i\neq j}^M a_i\varphi_i + a_j\sum_{i=1}^Mc_i\varphi_i\\
																								&= \sum_{i=1,i\neq j}^M (a_i+a_jc_i)\varphi_i + a_jc_j \varphi_j \\
																								&= 0.																	
\end{align*}
Since $c_j\neq 0$, we must have $a_j=0$ by linear independence of $\Phi$, but then $a_i=0$ for all $i\in\{1,\ldots j-1, j+1,\ldots,M\}$.  Thus $\Psi_j$ is linearly independent.

Since $\psi\in \spann{}(\Phi)$, then $\spann{}(\Psi_j)\subseteq \spann{}(\Phi)$. It follows $\spann{}(\Psi_j)=\spann{}(\Phi)$ since $\abs{\Psi_j}= \abs{\Phi}$.
\end{proof}

We will also be interested in which vectors are contained in the spans of each set in a fundamental partition.  The following lemma is trivial but does provides some information in this regard.

\begin{lemma} \label{up}
Let $\{F_i\}_{i=1}^{\ell}$ be a fundamental partition of $\Phi=\{\varphi_i\}_{i=1}^M$.  Then $\spann{}(F_j) \subseteq \spann{}(F_i)$ for $i\leq j$.
\end{lemma}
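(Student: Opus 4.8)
The plan is to prove Lemma~\ref{up} by contradiction, using the majorization characterization of fundamental partitions (the theorem stated just above) together with Proposition~\ref{switch} to exhibit a ``better'' ordered partition whenever the claimed inclusion fails. So suppose toward a contradiction that $i \leq j$ but $\spann(F_j) \not\subseteq \spann(F_i)$. Then there is a vector $\varphi \in F_j$ with $\varphi \notin \spann(F_i)$; in particular $F_i \cup \{\varphi\}$ is linearly independent. The idea is that we can move $\varphi$ from row $j$ up to row $i$: set $F_i' = F_i \cup \{\varphi\}$, $F_j' = F_j \setminus \{\varphi\}$, and $F_t' = F_t$ for all other $t$. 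Each $F_t'$ is still linearly independent, so $\{F_t'\}_{t=1}^{\ell}$ is a partition of $\Phi$ into linearly independent sets (we can reorder it to be an ordered partition if necessary, but this reordering can only help the majorization count).

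The key step is then to check that this new partition violates property (ii) of the majorization theorem relative to $F$. After the move we have $\abs{F_i'} = \abs{F_i} + 1$ while $\abs{F_t'} = \abs{F_t}$ for $t < j$, $t \neq i$. Hence for every index $n$ with $i \leq n < j$ we get $\sum_{t=1}^{n} \abs{F_t'} = \sum_{t=1}^{n}\abs{F_t} + 1 > \sum_{t=1}^{n}\abs{F_t}$, so the new ordered partition fails to be majorized by $F$ — contradicting that $F$ is a fundamental partition. (One should note that $i < j$ strictly here, since if $i = j$ the inclusion is trivial; and the partial sums for $n \geq j$ are unchanged, so there is no inconsistency, we only need one violated index, namely $n = i$.)

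The one subtlety to handle carefully is the reordering: after moving $\varphi$ the family $\{F_t'\}$ may no longer satisfy $\abs{F_t'} \geq \abs{F_{t+1}'}$, so strictly speaking it is not yet an ordered partition, and the majorization theorem is phrased for ordered partitions. The fix is routine: sorting the parts of a partition into nonincreasing order can only increase each partial sum $\sum_{t=1}^{n}\abs{\cdot}$ (or leave it equal), and leaves the total $M$ and the number of nonempty parts alone (a part $F_j'$ that becomes empty only decreases $\ell$, which is consistent with (i) but still gives a partition we may pad or whose length we compare via (i)). So the sorted version $\{P_t\}$ of $\{F_t'\}$ still has $\sum_{t=1}^{i}\abs{P_t} \geq \sum_{t=1}^{i}\abs{F_t'} = \sum_{t=1}^{i}\abs{F_t} + 1$, again contradicting (ii).

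The main obstacle, such as it is, is purely bookkeeping: making sure the reordering does not secretly resolve the contradiction and that the relevant partial sum is genuinely the one at index $i$. Conceptually the argument is short — a fundamental partition is ``top-heavy'' by its majorization property, so any vector that could legally be pushed to an earlier row already lies outside the span of that row only if it was already impossible to push it, i.e. it lies in the span. Indeed, since the Young-diagram picture makes clear that $\abs{F_i} \geq \abs{F_j}$, one can also phrase the whole thing as: $F_i$ being ``as large as possible'' forces $\spann(F_j) \subseteq \spann(F_i)$, because otherwise row $i$ could absorb one more vector.
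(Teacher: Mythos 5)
Your proof is correct and is essentially the paper's argument: the paper's one-line proof also observes that $F_i\cup\{\varphi\}$ would be linearly independent and concludes this contradicts fundamentality, leaving implicit exactly the bookkeeping (the swap, the reordering, and the appeal to maximality/majorization) that you spell out. The only cosmetic difference is that you invoke the majorization characterization explicitly rather than the lexicographic definition, which is a fine and careful way to make the contradiction precise.
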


\begin{proof} 
Suppose there existed some $\varphi \in F_j$, such that $\varphi \notin \spann{}(F_i)$.  Then $\{F_{i}, \varphi\}$ would be linearly independent contradicting our assumption 
that $F$ is a fundamental partition.
\end{proof}

This shows that
 any vector is contained in the spans of the sets before it in the partition; however, we may carefully choose certain vectors which must be contained in the spans of all sets, with the possible exception of $F_{\ell}$.

\begin{lemma}\label{down}
Let $\{F_i\}_{i=1}^{\ell}$ be a fundamental partition of $\Phi=\{\varphi_i\}_{i=1}^M$.  Pick any $\varphi_{\ell} \in F_{\ell}$ and let $S_{\ell-1}\subseteq F_{\ell-1}$ be smallest set such that $\varphi \in \spann{}(S_{\ell-1})$.  Fix any $j\leq \ell-1$, and let $S_j$ be the smallest subset of $F_j$ such that $\spann{}(S_{\ell-1})\subseteq \spann{}(S_j)$.  Then $\spann{}(S_j)\subseteq \spann{}(F_{i})$, $i=1,\ldots,\ell-1$.
\end{lemma}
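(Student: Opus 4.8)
The plan is to reduce the claim to the single inclusion $\spann(S_j)\subseteq\spann(F_{\ell-1})$ and then to contradict the majorization property of a fundamental partition. First I would note that by Lemma~\ref{up} we have $\spann(F_{\ell-1})\subseteq\spann(F_i)$ for every $i=1,\dots,\ell-1$; hence, once we know $\spann(S_j)\subseteq\spann(F_{\ell-1})$, the desired conclusion $\spann(S_j)\subseteq\spann(F_i)$ for all $i=1,\dots,\ell-1$ follows immediately. The case $j=\ell-1$ is trivial (then $\spann(S_j)\subseteq\spann(F_{\ell-1})$ by definition), so I may assume $j<\ell-1$, which makes $F_j$, $F_{\ell-1}$, and $F_\ell$ three distinct sets of the partition.

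Assume toward a contradiction that some $\eta_0\in S_j$ satisfies $\eta_0\notin\spann(F_{\ell-1})$. This is where I would exploit the minimality in the definitions. By minimality of $S_j$ we have $\spann(S_{\ell-1})\not\subseteq\spann(S_j\setminus\{\eta_0\})$, so there is a vector $\psi^\ast\in S_{\ell-1}$ with $\psi^\ast\notin\spann(S_j\setminus\{\eta_0\})$; since $\psi^\ast\in\spann(S_{\ell-1})\subseteq\spann(S_j)$ and $S_j$ is linearly independent, writing $\psi^\ast$ in the basis $S_j\subseteq F_j$ produces a nonzero coefficient on $\eta_0$. By minimality of $S_{\ell-1}$, the expansion $\varphi_\ell=\sum_{\psi\in S_{\ell-1}}a_\psi\psi$ has $a_\psi\neq0$ for every $\psi$; in particular $a_{\psi^\ast}\neq0$, and $a_{\psi^\ast}$ is the coefficient of $\psi^\ast$ in the unique expansion of $\varphi_\ell$ in the basis $F_{\ell-1}$.

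Now I would make two adjustments to the partition. Using Proposition~\ref{switch} applied to the linearly independent set $F_j$ and the vector $\psi^\ast\in\spann(F_{\ell-1})\subseteq\spann(F_j)$ (whose coefficient on $\eta_0$ is nonzero), the set $F_j':=(F_j\setminus\{\eta_0\})\cup\{\psi^\ast\}$ is linearly independent; and since $\eta_0\notin\spann(F_{\ell-1})\supseteq\spann(F_{\ell-1}\setminus\{\psi^\ast\})$, the set $F_{\ell-1}':=(F_{\ell-1}\setminus\{\psi^\ast\})\cup\{\eta_0\}$ is linearly independent. Replacing $F_j,F_{\ell-1}$ by $F_j',F_{\ell-1}'$ gives a partition of $\Phi$ into linearly independent sets with all cardinalities unchanged. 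The step I expect to be the main obstacle is to verify that $\varphi_\ell\notin\spann(F_{\ell-1}')$: if it were, then expanding $\varphi_\ell$ over $(F_{\ell-1}\setminus\{\psi^\ast\})\cup\{\eta_0\}$ and using that $\varphi_\ell\in\spann(F_{\ell-1})$ while $\eta_0\notin\spann(F_{\ell-1})$ forces the $\eta_0$-coefficient to vanish, so $\varphi_\ell\in\spann(F_{\ell-1}\setminus\{\psi^\ast\})$; but this says the $\psi^\ast$-coefficient of $\varphi_\ell$ in the $F_{\ell-1}$-basis is zero, contradicting $a_{\psi^\ast}\neq0$.

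With $\varphi_\ell\notin\spann(F_{\ell-1}')$ in hand, I move $\varphi_\ell$ from $F_\ell$ into $F_{\ell-1}'$: the set $F_{\ell-1}'\cup\{\varphi_\ell\}$ is linearly independent and $F_\ell\setminus\{\varphi_\ell\}$ remains linearly independent, so we obtain a partition of $\Phi$ into linearly independent sets in which the set of index $\ell-1$ has size $\abs{F_{\ell-1}}+1$, the set of index $\ell$ has size $\abs{F_\ell}-1$, and all other sizes agree with those of $F$. Reordering into an ordered partition $P$ and using $\abs{F_{\ell-1}}\ge\abs{F_\ell}$, this amounts to moving a box of the Young diagram of $F$ from a lower row to a higher one, so $\sum_{t=1}^{s}\abs{P_t}>\sum_{t=1}^{s}\abs{F_t}$ for some $s$ (and if $\abs{F_\ell}=1$ then $P$ has fewer than $\ell$ sets); either way $P$ violates the characterization of fundamental partitions recalled above. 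This contradiction shows no such $\eta_0$ exists, i.e. $\spann(S_j)\subseteq\spann(F_{\ell-1})$, completing the proof.
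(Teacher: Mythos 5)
Your proof is correct and follows essentially the same route as the paper: a two-step exchange via Proposition~\ref{switch} (justified by the minimality of $S_j$ and $S_{\ell-1}$) producing a partition into linearly independent sets whose row $\ell-1$ grows by one at the expense of row $\ell$, contradicting the majorization characterization of a fundamental partition. The only difference is bookkeeping --- you place $\eta_0$ into row $\ell-1$ and then move $\varphi_\ell$ up, while the paper places $\varphi_\ell$ into row $\ell-1$ and then appends $\eta_0$ --- and your version has the merit of spelling out the nonvanishing-coefficient checks that the paper leaves implicit.
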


\begin{proof}
Clearly the sets $S_{\ell-1}$ and $S_j$ exist by Lemma \ref{up}.  We will prove the statement for $i=\ell-1$.  The result will then follow for all $i=1,\ldots,\ell-1$ since $\spann{}(F_{\ell-1})\subseteq \spann{}(F_i)$ for $i \leq \ell-1$.

We will assume the result fails and get a contradiction.
So assume there exists some $\varphi_{j} \in S_j$ such that $\varphi_{j} \notin \spann{}(F_{\ell-1})$.  By Proposition \ref{switch}, there exits some $\varphi_{\ell-1}\in S_{\ell-1}$ such that $\{S_j,\varphi_{\ell-1}\}\setminus \{\varphi_{j}\}$ is linearly independent with the same span as $S_j$.  Similarly, $\{S_{\ell-1},\varphi_\ell\}\setminus \{\varphi_{\ell-1}\}$ is linearly independent and has the same span as $S_{\ell-1}$.  Thus we can partition $\Phi\setminus \{\varphi_j\}$ into $\ell$ linearly independent sets, say $G=\{G_i\}_{i=1}^{\ell}$ given by
\[
	G_i= \begin{cases}	\{F_j,\varphi_{\ell-1}\}\setminus \{\varphi_{j}\}	& \mbox{for } i=j \\
											\{F_{\ell-1},\varphi_{\ell}\}\setminus \{\varphi_{\ell-1}\}  & \mbox{for } i=\ell-1 \\
											F_{\ell}\setminus \{\varphi_{\ell}\}	& \mbox{for } i=\ell\\
											F_i & \mbox{for } i\neq j, \ell-1, \ell
			\end{cases}
\]
Notice $\abs{G_i}=\abs{F_i}$ and $\spann{}(G_i)=\spann{}(F_i)$ for $i=1,\ldots,\ell-1$.  Then $\{G_{\ell-1},\varphi_j\}$ is linearly independent with $\abs{\{G_{\ell-1},\varphi_j\}}>\abs{F_{\ell-1}}$ contradicting the fact that $F$ was a fundamental partition.
\end{proof}

\begin{corollary} \label{cor-down}
Let $\{F_i\}_{i=1}^{\ell}$ be a fundamental partition of $\Phi=\{\varphi_i\}_{i=1}^M$.  Pick any $\varphi_{\ell} \in F_{\ell}$ and let $S^{(1)}_{{\ell}-1} \subseteq F_{{\ell}-1}$ be smallest set such that $\varphi_{\ell} \in \spann{}(S_{{\ell}-1})$.  Let $S_i^{(1)}\subseteq F_i$, $i=1,\ldots {\ell}-1$ be the smallest subset such that $\spann{}(S^{(1)}_{{\ell}-1}) \subseteq \spann{}(S_i^{(1)})$.  Pick a $S_{j_1}^{(1)}$ such that $\abs{S_{j_1}^{(1)}}\geq \abs{S_{i}^{(1)}}$ for all $i=1,\ldots,{\ell}-1$, and set $S^{(1)}_{j_1}=S^{(2)}_{j_1}$.  Now define $S_i^{(2)}\subseteq F_i$, $i=1,\ldots {\ell}-1$ as the smallest subset such that $\spann{}(S^{(2)}_{j_1}) \subseteq \spann{}(S_i^{(2)})$ and choose $S_{j_2}^{(2)}$ so that $\abs{S_{j_2}^{(2)}}\geq \abs{S_{i}^{(2)}}$ for all $i=1,2\ldots,{\ell}-1$.  Continue this process so that $S_i^{(n)}\subseteq F_i$, $i=1,\ldots {\ell}-1$ is the smallest subset such that $\spann{}(S^{(n)}_{j_{n-1}}) \subseteq \spann{}(S_i^{(n)})$.  Then $\spann{}(S^{(n)}_{j_n})\subseteq \spann{}(F_{i})$, for $i=1,\ldots,{\ell}-1$.
\end{corollary}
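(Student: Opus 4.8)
The plan is to induct on $n$. For $n=1$ the assertion is exactly Lemma~\ref{down} (applied with $j=j_1$, $S_{\ell-1}=S^{(1)}_{\ell-1}$ and $S_j=S^{(1)}_{j_1}$), so the base case needs nothing new. For the inductive step I would run the switching argument of Lemma~\ref{down} along a chain of $n+1$ links in place of the two links used there. To organize this, put $j_0:=\ell-1$, $\Sigma_1:=S^{(1)}_{\ell-1}$, and $\Sigma_{k+1}:=S^{(k)}_{j_k}$ for $k\ge1$. Then $\Sigma_{k+1}\subseteq F_{j_k}$ for every $k\ge0$; for $k\ge1$ the set $\Sigma_{k+1}$ is, by construction, the smallest subset of $F_{j_k}$ whose span contains $\spann{}(\Sigma_k)$; and $\Sigma_1$ is the smallest subset of $F_{\ell-1}$ with $\varphi_\ell\in\spann{}(\Sigma_1)$. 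What must be shown at stage $n$ is that $\spann{}(\Sigma_{n+1})\subseteq\spann{}(F_i)$ for $i=1,\dots,\ell-1$, and by Lemma~\ref{up} it suffices to treat $i=\ell-1$.

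Suppose, for a contradiction, that some $v\in\Sigma_{n+1}$ lies outside $\spann{}(F_{\ell-1})$, and assume for the moment that $j_0,j_1,\dots,j_n$ are pairwise distinct (the general case is discussed below). First I would build a chain $v=v_{n+1},v_n,\dots,v_1,v_0=\varphi_\ell$ with $v_k\in\Sigma_k$ for $1\le k\le n$: given $v_{k+1}$, minimality of $\Sigma_{k+1}$ yields $v_k\in\Sigma_k$ with $v_k\notin\spann{}(\Sigma_{k+1}\setminus\{v_{k+1}\})$; since $v_k\in\spann{}(\Sigma_k)\subseteq\spann{}(\Sigma_{k+1})\subseteq\spann{}(F_{j_k})$, expanding $v_k$ in the linearly independent set $F_{j_k}$ shows the coefficient of $v_{k+1}$ is nonzero. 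Likewise, minimality of $\Sigma_1$ together with $\varphi_\ell\in\spann{}(\Sigma_1)\subseteq\spann{}(F_{\ell-1})$ makes the coefficient of $v_1$ in the expansion of $\varphi_\ell$ in $F_{\ell-1}$ nonzero. By Proposition~\ref{switch}, for each $k$ with $0\le k\le n$ the set obtained from $F_{j_k}$ by replacing $v_{k+1}$ with $v_k$ (where $v_0=\varphi_\ell$) is linearly independent, has the same cardinality, and spans $\spann{}(F_{j_k})$. Carrying out all of these replacements at once and deleting $\varphi_\ell$ from $F_\ell$ leaves a partition $\{G_i\}_{i=1}^\ell$ of $\Phi\setminus\{v\}$ into linearly independent sets with $\abs{G_i}=\abs{F_i}$ and $\spann{}(G_i)=\spann{}(F_i)$ for $i\le\ell-1$ and $\abs{G_\ell}=\abs{F_\ell}-1$. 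Since $v\notin\spann{}(F_{\ell-1})=\spann{}(G_{\ell-1})$, the set $G_{\ell-1}\cup\{v\}$ is linearly independent, so $\{G_1,\dots,G_{\ell-2},\,G_{\ell-1}\cup\{v\},\,G_\ell\}$ is a partition of $\Phi$ into linearly independent sets in which a unit has been moved out of the shortest block $F_\ell$ into the next one. Hence either some partial size sum of this partition exceeds the corresponding one for $F$, or (if $\abs{F_\ell}=1$) it has only $\ell-1$ nonempty blocks; either way the majorization characterization of a fundamental partition is violated, and the inductive step is done.

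The one genuinely delicate point — and the step I expect to be the main obstacle — is the case where the chain revisits a set, i.e.\ $j_a=j_b$ for some $a<b$; then several of the replacements above act on the same linearly independent set $F_{j_a}$ and must be combined into a single substitution that still preserves cardinality and span. This is where the construction in the statement pays off: whenever $j_a=j_b$ one has $\Sigma_{a+1}\subseteq\Sigma_{b+1}$, because both are linearly independent subsets of $F_{j_a}$ and $\spann{}(\Sigma_{a+1})\subseteq\spann{}(\Sigma_{a+2})\subseteq\cdots\subseteq\spann{}(\Sigma_{b+1})$. Using this nesting I would first prune every loop out of the chain (so the vectors $v_k$ become distinct), and then, inside each $F_i$ that is visited more than once, invoke the nesting of the relevant $\Sigma$'s to see that the inserted vectors are linearly independent modulo the span of the retained part of $F_i$ — equivalently, the associated coefficient matrix is triangular and invertible — so the obvious multi-vector strengthening of Proposition~\ref{switch} applies. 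With this bookkeeping in place the argument of the previous paragraph goes through unchanged, and the induction is complete.
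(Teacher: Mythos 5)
Your overall strategy is the paper's: trace a chain of vectors from the offending $v\notin\spann{}(F_{\ell-1})$ back to $\varphi_{\ell}$, exchange along the chain via Proposition \ref{switch} to produce a partition of $\Phi\setminus\{v\}$ into $\ell$ linearly independent sets whose first $\ell-1$ blocks keep their sizes and spans, and then adjoin $v$ to the $(\ell-1)$st block to violate the majorization characterization. The base case, the construction of the chain via minimality of the sets $\Sigma_k$, and the final contradiction are all sound and are exactly what the paper does.

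The gap is at the point you yourself flagged: the simultaneous multi-exchange inside a row that the chain visits more than once. Suppose $F_j$ is visited at stages $k_1<\dots<k_p$ (so $j_{k_1}=\dots=j_{k_p}=j$ and $\Sigma_{k_1+1}\subseteq\dots\subseteq\Sigma_{k_p+1}\subseteq F_j$); you remove $v_{k_1+1},\dots,v_{k_p+1}$ and insert $v_{k_1},\dots,v_{k_p}$. The relevant matrix $(c_{qr})$, where $c_{qr}$ is the coefficient of $v_{k_r+1}$ in the expansion of $v_{k_q}$ in the basis $F_j$, does have nonzero diagonal, but $c_{qr}=0$ for $q<r$ is guaranteed only when $v_{k_r+1}\notin\Sigma_{k_q+1}$. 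Since $v_{k_r+1}$ is merely some element of $\Sigma_{k_r+1}\supseteq\Sigma_{k_q+1}$, chosen at an earlier step of the downward construction, before it is known that the chain will return to row $j$, nothing forces it to avoid $\Sigma_{k_q+1}$; so the matrix need not be triangular and its determinant can vanish. Pruning loops removes repeated vectors, not repeated rows, so it does not resolve this. The paper closes exactly this hole with a different device: at each link it chooses the inserted vector so that the stage $m_{i+1}$ at which that vector's set $S^{(m_{i+1})}_{j_{m_{i+1}}}$ occurs is \emph{minimal} (exploiting the same nesting $S^{(a)}_{j_a}\subseteq S^{(b)}_{j_b}$ for $j_a=j_b$ that you observed), and then continues the chain from stage $m_{i+1}$ rather than from stage $k-1$. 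This produces a strictly decreasing sequence $n>m_1>\dots>m_k=1$ of single exchanges in nested sets, so no simultaneous multi-exchange within one row is ever required. To complete your argument you would need either to import this minimal-stage choice or to supply an actual proof that $(c_{qr})$ is invertible.
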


\begin{proof}
For $n=1$, this is Lemma \ref{down}.  Notice this gaurantees the sets $S^{(2)}_i$, $i=1,\ldots,{\ell}-1$ are well defined.  It suffices to show $\spann{}(S^{(n)}_{j_n})\subseteq \spann{}(F_{{\ell}-1})$.  Then $S^{(n+1)}_{j_n}=S^{(n)}_{j_n}$, and $S^{(n+1)}_i$, $i=1,\ldots,{\ell}-1$ are well defined.

Suppose instead there existed some $\varphi^{(n)}_{j_n} \in S^{(n)}_{j_n}$ such that $\varphi^{(n)}_{j_n} \notin \spann{}(F_{{\ell}-1})$.  By Proposition \ref{switch}, there exists some $\varphi^{(m_1)}_{j_{n-1}}$, $m_1<n$, such that $\{S^{(n)}_{j_n},\varphi^{(m_1)}_{j_{n-1}} \}\setminus \{\varphi^{(n)}_{j_n}\}$ is linearly independent and has the same span as $S^{(n)}_{j_n}$.  There may be several such vectors $\varphi^{(m_1)}_{j_{n-1}}$, but we may choose a vector such that $m_1$ is minimal.  Indeed simply note if $a<b$ and $j_a=j_b$ then $S^{(a)}_{j_a}\subseteq S^{(b)}_{j_b}$.

Then we consider $S^{(m_1)}_{j_{m_1}}$ and again apply Proposition \ref{switch}.  There exists some $\varphi^{(m_2)}_{j_{m_1-1}}$, $m_2<m_1$, such that $\{S^{(m_1)}_{j_{m_1}},\varphi^{(m_2)}_{j_{m_1-1}} \}\setminus \{\varphi^{(m_1)}_{j_{n-1}}\}$ is linearly independent and has the same span as $S^{(m_1)}_{j_{m_1}}$.  Choose the smallest such $m_2$ for $\varphi^{(m_2)}_{j_{m-1}}$.

By continuing this process $\{m_i\}_{i=1}^{k}$ is a decreasing sequence which terminates with $m_{k}=1$.  One final application of Proposition \ref{switch} implies $\{S^{(1)}_{{\ell}-1},\varphi_{{\ell}}\}\setminus \{\varphi^{(1)}_{{\ell}-1}\}$ is linearly independent and has the same span as ${S^{(1)}_{{\ell}-1}}$.  

Thus we can partition $\Phi \setminus \{\varphi^{(n)}_{j_n}\}$ into ${\ell}$ sets of linear independent vectors, say $G=\{G_i\}_{i=1}^{\ell}$ where $\abs{G_i}=\abs{F_i}$ and $\spann{}(G_i)=\spann{}(F_i)$ for $i=1,\ldots,{\ell}-1$.  However, recalling $\varphi^{(n)}_{j_n} \notin \spann{}(F_{{\ell}-1})$, $\{G_{{\ell}-1},\varphi^{(n)}_{j_n}\}$ is also linearly independent contradicting that $F$ was a fundamental partition.

\end{proof}
 
The arguement in Corollary \ref{cor-down} is quite easy to visualize as shown in figure \ref{fig2}.  Here we take an example where a fundamental partition contains six sets.  The rows correspond to these sets $F_i$, and the $S_i^{(n)}$ are represented by the labeled vectors $\varphi_i^{(n)}$.  When we have an appropriate vector that allows us to apply Proposition \ref{switch} (represented by a shaded square), we can move these vectors as indicated while maintaining linear independence of the row.

%\begin{figure}[ht!] 
%   \center{\includegraphics*[scale = 0.5]{exchange.eps}}
%   \caption{Original partition and resulting partition after performing argument in corollary \ref{cor-down}}
%  	\label{fig2}
%\end{figure}

\begin{figure}[ht!] 
   \center{\includegraphics*[scale = 0.7]{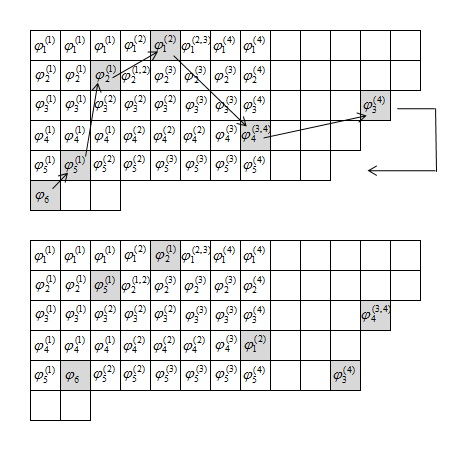}}
   \caption{Original partition and resulting partition after performing argument in corollary \ref{cor-down}}
   \label{fig2}
\end{figure}

Using the above lemmas and corollaries, the Rado-Horn theorem will follow from the existence of so-called transversals in a fundamental partition.  We borrow the term transversal from results in matroid theory where a fundamental partition is a basis for a sum of matroids \cite{transversal,colorings}.

\begin{definition}
Given a fundamental partition $\{F_i\}_{i=1}^{\ell}$ of $\Phi=\{\varphi_i\}_{i=1}^M$ and $t\leq {\ell}$, we call $T\subseteq \Phi$ a {\bf t-transversal} if $T=\{S_i\}_{i=1}^t$, $S_i\subseteq F_i$, and $\spann{}(S_i)=\spann{}(S_j)$ for all $i,j\in \{1,\ldots,t\}$.
\end{definition}

We first show the existence of transversals in a fundamental partition.

\begin{corollary}\label{transversal}
Consider the vectors $\Phi=\{\varphi_i\}_{i=1}^M$ with a fundamental partition $\{F_i\}_{i=1}^{\ell}$.  Fix $t<{\ell}$ and choose any $\varphi_{k}\in F_{k}$ where $t<k$.  Then $\{F_i\}_{i=1}^{t}$ contains a $t$-transversal, $T=\{S_i\}_{i=1}^{t}$, with $\varphi_{k} \in \spann{}(S_i)$ for all $i\in\{1,\dots,t\}$.
\end{corollary}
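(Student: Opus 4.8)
The plan is to deduce the statement from Corollary~\ref{cor-down} in two easy steps: first replace $\ell$ by $k$, then truncate to the first $t$ sets. For the first step I would check that the initial segment $\{F_i\}_{i=1}^{k}$ is itself a fundamental partition of $\Phi_k:=\bigcup_{i=1}^{k}F_i$. It is certainly an ordered partition of $\Phi_k$ into linearly independent sets, so by the majorization characterization of fundamental partitions stated above it is enough to show it majorizes every ordered partition $\{Q_i\}_{i=1}^{k'}$ of $\Phi_k$ into linearly independent sets. Given such a $\{Q_i\}$, reorder the family $\{Q_1,\dots,Q_{k'},F_{k+1},\dots,F_\ell\}$ by size to obtain an ordered partition of $\Phi$ with $k'+(\ell-k)$ sets, and apply the fundamentality of $\{F_i\}_{i=1}^{\ell}$ to it: this yields $\ell\le k'+(\ell-k)$, hence $k\le k'$, and, since for $m\le k$ the $m$ largest parts of that partition of $\Phi$ already have total size at least $\sum_{i=1}^{m}\abs{Q_i}$, the partial-sum inequalities $\sum_{i=1}^{m}\abs{Q_i}\le\sum_{i=1}^{m}\abs{F_i}$ for $m\le k$. (Alternatively one can rerun the proof of Corollary~\ref{cor-down} with $k$ in place of $\ell$; the only new feature is that the rearrangement used to reach the contradiction is a partition of all of $\Phi$, obtained by leaving $F_{k+1},\dots,F_\ell$ untouched, and it violates the $(k-1)$-st partial-sum inequality for $F$.)

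With the initial segment in hand, apply Corollary~\ref{cor-down} to the fundamental partition $\{F_i\}_{i=1}^{k}$ of $\Phi_k$ and the given vector $\varphi_k\in F_k$, which now plays the role of the bottom-row vector. This produces sets $S_i^{(n)}\subseteq F_i$ for $i=1,\dots,k-1$ and indices $j_n$, with $\spann(S_{j_n}^{(n)})\subseteq\spann(F_i)$ for all $i\le k-1$ and all $n$. The crucial point is that this process stabilizes. Indeed, by construction $\spann(S_{j_{n-1}}^{(n-1)})\subseteq\spann(S_i^{(n)})$ for every $i\le k-1$, so in particular $\spann(S_{j_{n-1}}^{(n-1)})\subseteq\spann(S_{j_n}^{(n)})$, and since all of these sets are linearly independent the integer $\abs{S_{j_n}^{(n)}}=\dim\spann(S_{j_n}^{(n)})$ is nondecreasing in $n$ and bounded by $\abs{F_1}$, hence eventually constant. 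Choose $N$ with $\abs{S_{j_N}^{(N)}}=\abs{S_{j_{N+1}}^{(N+1)}}$. Then $\spann(S_{j_N}^{(N)})\subseteq\spann(S_{j_{N+1}}^{(N+1)})$ with equal dimensions, so these spans agree, and for every $i\le k-1$ we have $\spann(S_{j_N}^{(N)})\subseteq\spann(S_i^{(N+1)})$ while $\abs{S_i^{(N+1)}}\le\abs{S_{j_{N+1}}^{(N+1)}}=\abs{S_{j_N}^{(N)}}$, so a dimension count forces $\spann(S_i^{(N+1)})=\spann(S_{j_N}^{(N)})$ for all $i\le k-1$. Thus $\{S_i^{(N+1)}\}_{i=1}^{k-1}$ is a $(k-1)$-transversal.

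To finish, one checks that $\varphi_k$ lies in this common span and truncates. From the construction, $\varphi_k\in\spann(S_{k-1}^{(1)})\subseteq\spann(S_{j_1}^{(1)})$, and $S_{j_n}^{(n+1)}=S_{j_n}^{(n)}$ with $\spann(S_{j_n}^{(n)})=\spann(S_{j_n}^{(n+1)})\subseteq\spann(S_{j_{n+1}}^{(n+1)})$, so by induction $\varphi_k\in\spann(S_{j_n}^{(n)})$ for every $n$; in particular $\varphi_k\in\spann(S_{j_N}^{(N)})=\spann(S_i^{(N+1)})$ for all $i\le k-1$. Since $t\le k-1$, the truncation $T:=\{S_i^{(N+1)}\}_{i=1}^{t}$ is a $t$-transversal contained in $\{F_i\}_{i=1}^{t}$ with $\varphi_k\in\spann(S_i)$ for every $i\in\{1,\dots,t\}$, which is exactly the claim. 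I expect the stabilization step to be the only genuine obstacle: one has to combine the maximality built into the choice of $S_{j_n}^{(n)}$ with linear independence---so that cardinality equals dimension---to upgrade ``the sizes stop growing'' to ``all the spans $\spann(S_i^{(N+1)})$ coincide'', rather than merely being nested as in Lemma~\ref{up}. The reduction to the initial segment, the descent of $\varphi_k$ through the chain, and the final truncation are routine bookkeeping.
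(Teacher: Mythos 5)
Your proposal is correct and follows essentially the same route as the paper: reduce to a sub-collection of the fundamental partition (the paper keeps $\{F_1,\dots,F_t,F_k\}$ and asserts it is still fundamental, while you keep the initial segment $\{F_i\}_{i=1}^{k}$, justify fundamentality via the majorization characterization, and truncate at the end), then apply Corollary~\ref{cor-down}, use finiteness to find an index where $\abs{S_{j_n}^{(n)}}$ stabilizes, and upgrade the nested spans to equal spans by the cardinality-equals-dimension count. The only differences are cosmetic, and your explicit verification that $\varphi_k$ survives into every $\spann(S_i^{(n)})$ fills in a step the paper states without proof.
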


\begin{proof}
Notice if $F=\{F_i\}_{i=1}^{\ell}$ is a fundamental partition and we remove sets $F_i$, $i=t+1,\ldots, k-1,k+1,\ldots,{\ell}$, then $(\{F_i\}_{i=1}^t,F_{k})$ remains a fundamental partition for its vectors.  It therefore suffices to prove the statement for $t={\ell}-1$, and $k ={\ell}$.

Consider the sets $S_i^{(n)}$, $i=1,\ldots,{\ell}-1$, $n=1,2,\ldots$ as given in Corollary \ref{cor-down} where again $S_{j_n}^{(n)}$ is a largest such set for each $n$.  Notice $\spann{}(S_{j_n}^{(n)})\subseteq \spann{}(S_{j_{n+1}}^{(n+1)})$ for all $i=1,\ldots,{\ell}-1$.  Since we have only finitely many vectors, there exits a $n_0$ such that
\[
	\abs{S_{j_{n_0-1}}^{(n_0-1)}} = \abs{S_{j_{n_0}}^{(n_0)}}.
\]
Then
\[
	\abs{S_{j_{n_0-1}}^{(n_0)}} = \abs{S_{i}^{(n_0)}}. 
\]
Since $\spann{}(S_{j_{n_0-1}}^{(n_0)})\subseteq \spann{}(S_{i}^{(n_0)})$ for all $i=1,\ldots,{\ell}-1$, we conclude
\[  
	\spann{}(S_{j_{n_0-1}}^{(n_0)})=\spann{}(S_{i}^{(n_0)})
\]
for all $i\in 1,\ldots,{\ell}-1$.  Clearly $\varphi_{\ell}\in S_{i}^{(n_0)}$ and $S_i^{(n_0)} \subseteq F_i$ for all $i$ by construction.  Set $S_i=S_{i}^{(n_0)}$, and we have the desired ${\ell}-1$ transversal.
\end{proof}

This argument shows there may be multiple such transversals in a fundamental partition.  It is simple to see that given multiple $t$-transversals, there exists a $t$-transversal containing them.

\begin{lemma}\label{trans lemma}
Let $F=\{F_i\}_{i=1}^{\ell}$ be a fundamental partition  of $\Phi=\{\varphi_i\}_{i=1}^M$.  Suppose $T_1=\{U_i\}_{i=1}^t$ and $T_2=\{V_i\}_{i=1}^t$ are $t$-transversals in $F$.  Then $T=\{U_i \cup V_i\}_{i=1}^t$ is a $t$-transversal.
\end{lemma}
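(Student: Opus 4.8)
The plan is to verify the two defining properties of a $t$-transversal directly for the candidate $T=\{U_i\cup V_i\}_{i=1}^t$. The containment property is immediate: since $T_1$ and $T_2$ are $t$-transversals in $F$, we have $U_i\subseteq F_i$ and $V_i\subseteq F_i$, hence $U_i\cup V_i\subseteq F_i$ for every $i=1,\ldots,t$.

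For the span property I would invoke the elementary identity $\spann(A\cup B)=\spann(A)+\spann(B)$, valid for any two sets of vectors $A,B$. Applying it gives $\spann(U_i\cup V_i)=\spann(U_i)+\spann(V_i)$ for each $i$. Now, because $T_1$ is a $t$-transversal, all the subspaces $\spann(U_i)$, $i=1,\ldots,t$, coincide; denote this common subspace by $W$. Likewise, because $T_2$ is a $t$-transversal, all the $\spann(V_i)$ coincide; denote this by $X$. Therefore $\spann(U_i\cup V_i)=W+X$ for every $i=1,\ldots,t$, and in particular $\spann(U_i\cup V_i)=\spann(U_j\cup V_j)$ for all $i,j\in\{1,\ldots,t\}$. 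Together with the containment already checked, this shows $T$ is a $t$-transversal.

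I expect no real obstacle here; the only point worth being careful about is that the definition of a $t$-transversal imposes no linear independence or minimality condition on the constituent sets $S_i$ — it asks only that each $S_i$ lie in the corresponding $F_i$ and that the spans all agree — so there is nothing further to check (in particular, no need to worry about independence being destroyed when passing to unions). The argument does not even use that $F$ is a fundamental partition beyond the fact that it is a partition, which is what makes $U_i\cup V_i$ a subset of a single block $F_i$.
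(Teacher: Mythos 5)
Your proof is correct and follows essentially the same route as the paper's: both verify $U_i\cup V_i\subseteq F_i$ and then use $\spann{}(U_i\cup V_i)=\spann{}(U_i)+\spann{}(V_i)$ together with the fact that the spans within each transversal coincide. The paper additionally remarks that $U_i\cup V_i$ is linearly independent (as a subset of $F_i$), but as you correctly observe this is not required by the definition of a $t$-transversal.
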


\begin{proof}
For any $i=1,\ldots,t$, $U_i\cup V_i \subseteq F_i$ is linearly independent.  We then have
\begin{align*}
	\spann{}(U_i\cup V_i)&= \spann{}(U_i) + \spann{}(V_i) \\
												&= \spann{}(U_j) + \spann{}(V_j) \\
												&= \spann{}(U_j\cup V_j)
\end{align*}
for all $i,j\in \{1,\ldots,t\}$, and $T$ is a $t$-transversal.
\end{proof}

We are now ready to prove the Rado-Horn theorem.

%%%%%%%%%%%%%%%%%%%%%%%%%%%%%%%%%%%%%%%%%%%%%%%%%%%%%%%%%%%%%%%%%%%%%%%%%%%%%%%%%%%%%%%%%%%%%%%%%%%%%%%%%%%%%%%%%%%%%%%%%%%%%%%
%%%%%%%%%%%%%%%%%%%%%%%%%%%%%%%%%%%%%%%%%%%%%%%%%%%%%%%%%%%%%%%%%%%%%%%%%%%%%%%%%%%%%%%%%%%%%%%%%%%%%%%%%%%%%%%%%%%%%%%%%%%%%%%
\section{Proofs of Rado-Horn and its Generalizations}

We begin with the original.

\begin{theorem} (Rado-Horn)
Consider the vectors $\Phi=\{\varphi_i\}_{i=1}^M$.  Then the follwing are equivalent.
\begin{enumerate}[(i)]
	\item \label{one}The set $\Phi$ can be partitioned into sets $\{A_i\}_{i=1}^k$ such that $A_i$ is a linearly independent set for all $i=1,2,\ldots,k$.
	\item \label{two}For any subset $J \subseteq \Phi$, we have $\abs{J}/\dim \spann (J) \leq k$.
\end{enumerate}
\end{theorem}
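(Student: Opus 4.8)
The plan is to establish the two implications separately; (i) $\Rightarrow$ (ii) is immediate, while (ii) $\Rightarrow$ (i) is where the machinery built above is used. For (i) $\Rightarrow$ (ii): if $\Phi = \bigcup_{i=1}^{k} A_i$ with each $A_i$ linearly independent and $J \subseteq \Phi$, write $J = \bigcup_{i=1}^{k}(J \cap A_i)$; each $J \cap A_i$ is linearly independent and lies in $\spann(J)$, so $\abs{J \cap A_i} \le \dim\spann(J)$, and summing over $i$ yields $\abs{J} \le k \dim\spann(J)$.

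For (ii) $\Rightarrow$ (i), fix a fundamental partition $F = \{F_i\}_{i=1}^{\ell}$ of $\Phi$ (which exists by definition) and prove that the Rado-Horn inequality forces $\ell \le k$; then, since each $F_i$ is linearly independent and empty sets are vacuously linearly independent, appending $k - \ell$ empty sets produces the required partition into $k$ linearly independent sets. To bound $\ell$, argue by contradiction and assume $\ell \ge k+1$. Choose any $\varphi_{\ell} \in F_{\ell}$ (a fundamental partition has all $\ell$ parts nonempty) and apply Corollary \ref{transversal} with $t = \ell - 1$ and distinguished vector $\varphi_{\ell}$: this yields an $(\ell-1)$-transversal $T = \{S_i\}_{i=1}^{\ell-1}$ with $S_i \subseteq F_i$, common span $W := \spann(S_1) = \cdots = \spann(S_{\ell-1})$, and $\varphi_{\ell} \in W$. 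Put $J = \{\varphi_{\ell}\} \cup \bigcup_{i=1}^{\ell-1} S_i$. Since $\varphi_{\ell} \in W$ and each $S_i$ spans $W$, we get $\spann(J) = W$, hence $\dim\spann(J) = \abs{S_1} = \dim W$ (each $S_i$ being a linearly independent spanning set of $W$). As the $F_i$ are pairwise disjoint and $\varphi_{\ell} \notin S_i$ for $i < \ell$, the union is disjoint and $\abs{J} = 1 + \sum_{i=1}^{\ell-1}\abs{S_i} = 1 + (\ell-1)\dim W$. Therefore
\[
\frac{\abs{J}}{\dim\spann(J)} = (\ell-1) + \frac{1}{\dim W} > \ell - 1 \ge k ,
\]
(reading $1/0$ as $\infty$ in the degenerate case $W = \{0\}$, where $\varphi_{\ell} = 0$), which contradicts (ii). Hence $\ell \le k$.

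The heart of the matter --- and the reason for the preparatory results --- is the construction in Corollary \ref{transversal} of a transversal together with a vector sitting inside its common span, which in turn relies on Lemma \ref{down} and Corollary \ref{cor-down}; I expect this to be the only genuine obstacle. Once that structure is available, the single extra vector $\varphi_{\ell}$ is precisely what promotes the estimate $\abs{J}/\dim\spann(J) = \ell-1$ to a strict inequality, and this configuration is exactly what pins down the origin of the constant $k$ in the Rado-Horn inequality. Everything else is bookkeeping with disjointness and dimension counts.
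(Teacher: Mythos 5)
Your proof is correct and follows essentially the same route as the paper: the forward direction by intersecting $J$ with the parts, and the converse by taking a fundamental partition, extracting a transversal with a distinguished vector in its common span via Corollary \ref{transversal}, and exhibiting a set $J$ violating the inequality. The only (immaterial) difference is that you use an $(\ell-1)$-transversal to get the bound $(\ell-1)+1/\dim W > k$, while the paper uses a $k$-transversal to get $k + 1/\dim\spann(T) > k$ directly.
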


\begin{proof}
$(\ref{one}\Rightarrow\ref{two}$).  This direction is essentially trivial.  Suppose $\{A_i\}_{i=1}^k$ is a partition of $\Phi$ such that $A_i$ is a linearly independent set for all $i=1,2,\ldots,k$.  Then for any $J\subset \Phi$, let $J_i=J\cap A_i$, $i=1,\ldots,k$.  Then
\[
	\abs{J} = \sum_{i=1}^k \abs{J_i} = \sum_{i=1}^k\dim\spann{}(J_i) \leq k \dim\spann{}(J)
\]
giving the result.

$(\ref{two}\Rightarrow\ref{one})$.  Suppose $\Phi$ cannot be partitioned into $k$ linearly independent sets.  Then for a fundamental partition $\{F_i\}_{i=1}^{\ell}$, we must have $\ell>k$.  By Corollary \ref{transversal}, for any $\varphi \in F_{\ell}$, $\{F_i\}_{i=1}^k$ contains a $k$-transversal, $T$ with $\varphi \in T$.  Then we have
\begin{equation}\label{fail}
	\frac{\abs{T,\{\varphi\}}}{\dim\spann{}(T,\{\varphi\})} = k+\frac{1}{\dim\spann{}(T)}>k.
\end{equation}
\end{proof}

It is a simple matter to adapt the idea of this proof to show a generalized version of the Rado-Horn theorem.  This theorem originally appeared in \cite{CKS}.

\begin{theorem} (Generalized Rado-Horn)
Consider the vectors $\Phi=\{\varphi_i\}_{i=1}^M$.  Then the follwing are equivalent.
\begin{enumerate}[(i)]
	\item \label{one1} There exists a subset $H\subseteq \Phi$ such that $\Phi \setminus H$ can be partitioned into $k$ linearly independent sets.
	\item \label{two1} For any subset $J \subseteq \Phi$, we have $(\abs{J}-\abs{H})/\dim \spann (J) \leq k$.
\end{enumerate}
\end{theorem}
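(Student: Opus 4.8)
The plan is to run the proof of the Rado--Horn theorem almost verbatim, with the single ``bad vector'' replaced by a whole block of bad vectors coming from the tail of a fundamental partition. Write $h=\abs{H}$; condition (ii) then reads $\abs{J}-h\le k\dim\spann{}(J)$ for every $J\subseteq\Phi$, and (i) asks for an $H$ of exactly this size (so implicitly $h\le M$).

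For $(\ref{one1}\Rightarrow\ref{two1})$ I would just repeat the trivial counting argument: if $\{A_i\}_{i=1}^k$ partitions $\Phi\setminus H$ into linearly independent sets and $J\subseteq\Phi$, split $J=(J\cap H)\cup\bigcup_i(J\cap A_i)$ and bound
\[
	\abs{J}\le\abs{H}+\sum_{i=1}^k\dim\spann{}(J\cap A_i)\le\abs{H}+k\dim\spann{}(J),
\]
which rearranges to (ii).

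For $(\ref{two1}\Rightarrow\ref{one1})$ I would fix a fundamental partition $\{F_i\}_{i=1}^{\ell}$ of $\Phi$; if $\ell\le k$ there is nothing to do (remove any $h$ of the $\varphi_i$ and pad with empty sets), so assume $\ell>k$. Let $T=\{S_i\}_{i=1}^k$ be the largest $k$-transversal in $F$ --- the componentwise union of all $k$-transversals, which is again a $k$-transversal by Lemma \ref{trans lemma} and is nontrivial by Corollary \ref{transversal} applied to some $\varphi\in F_{\ell}$ --- and set $V=\spann{}(S_1)=\cdots=\spann{}(S_k)$ and $d=\dim V\ge1$. The crucial point, and the analogue of inequality \eqref{fail}, is that every vector of the tail lies in $V$: given $\varphi\in F_m$ with $k<m\le\ell$, Corollary \ref{transversal} produces a $k$-transversal $\{S_i^{\varphi}\}_{i=1}^k$ with $\varphi\in\spann{}(S_i^{\varphi})$, and since $S_i^{\varphi}\subseteq S_i$ by maximality of $T$ we get $\varphi\in\spann{}(S_i^{\varphi})\subseteq\spann{}(S_i)=V$. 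Consequently the set
\[
	J=\Big(\bigcup_{i=1}^k S_i\Big)\cup\Big(\bigcup_{m=k+1}^{\ell}F_m\Big)
\]
is contained in $V$ yet already contains $S_1$, so $\dim\spann{}(J)=d$, while the union is disjoint and $\abs{S_i}=d$, giving $\abs{J}=kd+\sum_{m>k}\abs{F_m}$. Feeding this $J$ into (ii) forces $h\ge\sum_{m>k}\abs{F_m}$, and then any $H\subseteq\Phi$ with $\abs{H}=h$ and $F_{k+1}\cup\cdots\cup F_{\ell}\subseteq H$ does the job: $\Phi\setminus H$ is partitioned by $F_1\setminus H,\dots,F_k\setminus H$, each a subset of a linearly independent set.

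I expect the only genuine work to be verifying that this particular $J$ has $\abs{J}=kd+\sum_{m>k}\abs{F_m}$ and $\dim\spann{}(J)=d$ --- that is, pinning down exactly which vectors of $\Phi$ are forced into $V$. The containment $F_m\subseteq V$ for $m>k$ is precisely where Corollary \ref{transversal} together with the ``largest transversal'' construction coming from Lemma \ref{trans lemma} earns its keep; everything else is bookkeeping, subject to the harmless conventions $h=\abs{H}\le M$ and $\dim\spann{}(J)>0$.
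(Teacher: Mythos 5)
Your proof is correct and follows essentially the same route as the paper: take a fundamental partition $\{F_i\}_{i=1}^{\ell}$, build a $k$-transversal $T$ whose span absorbs the entire tail $F_{k+1},\dots,F_{\ell}$, and test condition (ii) on $J = T\cup F_{k+1}\cup\cdots\cup F_{\ell}$, whose Rado--Horn quotient exceeds $k$ unless $\abs{H}\ge\sum_{m>k}\abs{F_m}$. The only (harmless) differences are that you argue directly rather than by contraposition, and you capture the tail in $\spann{}(T)$ by taking the componentwise union of all $k$-transversals via Lemma \ref{trans lemma}, where the paper instead takes a transversal spanning $F_{k+1}$ and invokes Lemma \ref{up} for the remaining $F_j$.
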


\begin{proof}
$(\ref{one1}\Rightarrow\ref{two1})$.  Suppose such a partition of $\Phi \setminus H$ exists.  Then this direction remains trivial since by for any $J\subseteq \Phi$, we have
\[
	\frac{\abs{J}-\abs{H}}{\dim\spann{}(J)}\leq \frac{\abs{J\setminus H}}{\dim\spann{}(J\setminus H)} \leq k
	%\abs{J}-\abs{H}\leq \abs{J} = \sum_{i=1}^k \abs{J_i} = \sum_{i=1}^k\dim\spann{}(J_i) \leq k \dim\spann{}(J).
\]
by the original Rado-Horn theorem.

$(\ref{two1}\Rightarrow\ref{one1})$. For the reverse direction, fix some $L$ and suppose for every $H\subseteq \Phi$ with $\abs{H}=L$, $\Phi \setminus H$ cannot be partitioned into $k$ linearly independent sets.  Take a fundamental partition $F=\{F_i\}_{i=1}^{\ell}$ of $\Phi$.  Then $\ell > k$ and  $\sum_{i=k+1}^{\ell}\abs{F_i}>L$ for otherwise such a partition would exist.  Now consider $G=\{F_i\}_{i=1}^{k+1}$, and notice this is still a fundamental partition for a subset of $\Phi$.  Then Corollary \ref{transversal} and Lemma \ref{trans lemma} imply $G$ contains a $k$-transversal $T=\{T_i\}_{i=1}^{k}$ such that $\spann{}(F_{k+1})\subseteq \spann{}(T)$.

Now consider $J=\{T,F_{k+1},\ldots,F_{\ell}\}$ and $H$ as any subset of cadinality $L$.  Then
\[
	\frac{\abs{J}-\abs{H}}{\dim \spann (J)} > \frac{\abs{T}+1}{\dim \spann (J)} = k+ \frac{1}{\dim \spann (J)}>k.
\]

\end{proof}

We next consider the redundant case.  The following redundant versions of Rado-Horn were also originally proved in \cite{CKS}.  The transversals in a fundamental partition simply explain why the Rado-Horn inequality can fail when $\Phi$ cannot be partitioned into $k$ linearly independent sets.  

\begin{theorem} \label{gen} (Redundant Rado-Horn 1)
Consider the vectors $\Phi=\{\varphi_i\}_{i=1}^M$ in a vector space $V$.  If this set cannot be partitioned into $k$ linearly independent sets, then there exists a partition $\{A_i\}_{i=1}^k$ of the $\Phi$ and a subspace $S$ of $V$ such that the following hold:
\begin{enumerate}[(i)]
	\item For all $1\leq i\leq k$, there exists a subset $S_i\subseteq A_i$ such that $S=\spann{}(S_i)$.
	\item For $J=\{\varphi: \varphi \in S\}$,  $\abs{J}/\dim \spann (J) > k$.
\end{enumerate}
\end{theorem}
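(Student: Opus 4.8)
The plan is to pull both the partition and the subspace $S$ out of a fundamental partition of $\Phi$, reusing the transversal machinery that powered the proof of Rado-Horn above. Start with a fundamental partition $F=\{F_i\}_{i=1}^{\ell}$ of $\Phi$. Since $\Phi$ cannot be partitioned into $k$ linearly independent sets we have $\ell>k$, and because a fundamental partition is ordered with nonempty parts, $F_{k+1}\neq\emptyset$. I would then apply Corollary~\ref{transversal} with $t=k$: for each $\varphi\in F_{k+1}$ (note $t=k<k+1$) it produces a $k$-transversal $\{S^\varphi_i\}_{i=1}^{k}$, $S^\varphi_i\subseteq F_i$, carrying $\varphi$ into every $\spann(S^\varphi_i)$. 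Using Lemma~\ref{trans lemma} finitely many times to take unions, I obtain a single $k$-transversal $T=\{S_i\}_{i=1}^{k}$ with $S_i\subseteq F_i$, $\spann(S_1)=\cdots=\spann(S_k)=:S$, and $F_{k+1}\subseteq S$. This is exactly the transversal built in the proof of the generalized Rado-Horn theorem.

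Next comes the observation that $S$ already swallows the whole tail of $F$: by Lemma~\ref{up}, $\spann(F_j)\subseteq\spann(F_{k+1})\subseteq S$ for every $j\geq k+1$, so $F_{k+1}\cup\cdots\cup F_{\ell}\subseteq S$. For condition~(i) I would take the partition of $\Phi$ given by $A_i=F_i$ for $1\leq i\leq k-1$ and $A_k=F_k\cup F_{k+1}\cup\cdots\cup F_{\ell}$; any redistribution of $F_{k+1},\dots,F_{\ell}$ among the $A_i$ would do. Then $\{A_i\}_{i=1}^{k}$ is a partition of all of $\Phi$, and $S_i\subseteq F_i\subseteq A_i$ with $\spann(S_i)=S$, which is exactly (i).

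For condition~(ii), set $J=\{\varphi\in\Phi:\varphi\in S\}$. Each $S_i\subseteq F_i$ is linearly independent with $\spann(S_i)=S$, so $\abs{S_i}=\dim S$; since the $F_i$ are pairwise disjoint, $S_1,\dots,S_k,F_{k+1},\dots,F_{\ell}$ are pairwise disjoint subsets of $J$, and $S=\spann(S_1)\subseteq\spann(J)\subseteq S$ gives $\dim\spann(J)=\dim S$. Hence
\[
\frac{\abs{J}}{\dim\spann(J)}\;\geq\;\frac{k\dim S+\sum_{j=k+1}^{\ell}\abs{F_j}}{\dim S}\;=\;k+\frac{\sum_{j=k+1}^{\ell}\abs{F_j}}{\dim S}\;>\;k,
\]
with strictness because $F_{k+1}\neq\emptyset$.

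The one place I expect to need care --- the main obstacle --- is the amalgamation step: verifying that Lemma~\ref{trans lemma} really delivers a single $k$-transversal whose common span contains all of $F_{k+1}$, while each $S_i$ stays inside $F_i$ (it is this last point that lets the $A_i$ partition $\Phi$ itself rather than a proper subset). After that, the argument is just the disjointness-and-dimension count from the proof of Rado-Horn, now run backwards to exhibit, instead of a partition, the witnessing subset $J$ together with the partition it lives inside.
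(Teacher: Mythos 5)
Your proof is correct and follows essentially the same route as the paper: take a fundamental partition $\{F_i\}_{i=1}^{\ell}$ with $\ell>k$, extract a $k$-transversal via Corollary \ref{transversal}, set $S$ equal to its common span, and use the partition $(F_1,\ldots,F_{k-1},F_k\cup\cdots\cup F_{\ell})$. The only difference is that the paper uses a single vector $\varphi\in F_{\ell}$ and one transversal, whereas you union transversals over all of $F_{k+1}$ (as the paper itself does only in the subsequent Corollary, Redundant Rado-Horn 2); this is harmless and in fact yields a slightly stronger lower bound in (ii).
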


\begin{proof}
Take a fundamental partition $F= \{F_i\}_{i=1}^{\ell}$.  Then for $k<\ell$ the hypothesis of Rado-Horn are not met.  Choose any $\varphi \in F_{\ell}$, so there exists a $k$-transveral, $T$, in $F$ which contains $\varphi$ in its span. Simply consider the partition $\{A_i\}_{i=1}^k=(F_1,\ldots,F_{k-1},\{F_{k}, F_{k+1}, \ldots, F_{\ell\}}$ and define the subspace $S=\spann{}(T)$.  Then (1) holds since $T$ is a $k$-transversal, and (2) is clearly true since
\[
	\frac{\abs{J}}{\dim \spann (J)}\geq \frac{\abs{T, \{\varphi\}}}{\dim\spann{}(T, \{\varphi\})}>k
\]
as in equation (\ref{fail}).
\end{proof}

There is one difference between this result when compared to the original.  It is clear from the above arguement that the subspace $S$ may not be unique.  Indeed picking a different $\varphi$ may lead to a different transversal and thus a different subspace.  By taking several transversals and considering their union, we can obtain another result on the subspace $S$ which is more akin to the original theorem \cite{CKS}.

\begin{corollary} (Redundant Rado-Horn 2)
Consider the vectors $\Phi=\{\varphi_i\}_{i=1}^M$ in a vector space $V$.  If this set cannot be partitioned into $k$ linearly independent sets, then there exists a partition $\{A_i\}_{i=1}^k$ of $\Phi$ and a subspace $S$ of $V$ such that the following hold:
\begin{enumerate}[(i)]
	\item \label{one3} For all $1\leq i\leq k$, there exists a subset $S_i\subseteq A_i$ such that $S=\spann{}(S_i)$.
	\item \label{two3} For $J=\{\varphi: \varphi \in S\}$,  $\abs{J}/\dim \spann (J) > k$.
	\item \label{three3} For all $1\leq i\leq k$, $A_i \setminus S_i$ is linearly independent.
\end{enumerate}
\end{corollary}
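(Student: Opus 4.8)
\emph{Proof idea.} The partition and subspace will be essentially those used in the proof of Theorem \ref{gen}; the new ingredient, condition \ref{three3}, is obtained by choosing the transversal $T$ large enough that its span $S$ contains \emph{every} set of the fundamental partition past the $k$th, and then enlarging the sets $S_i$ so that they absorb exactly those leftover vectors.

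First I would fix a fundamental partition $\{F_i\}_{i=1}^{\ell}$ of $\Phi$. Since $\Phi$ cannot be split into $k$ linearly independent sets we have $\ell > k$ (otherwise $F_1,\dots,F_\ell$, padded with empty blocks, would be such a splitting), and in particular $F_{k+1}\neq\emptyset$. For each $\varphi\in F_{k+1}$, Corollary \ref{transversal} (with $t=k$) produces a $k$-transversal of $\{F_i\}_{i=1}^{k}$ whose common span contains $\varphi$; iterating Lemma \ref{trans lemma} over the finitely many $\varphi\in F_{k+1}$ combines these into a single $k$-transversal $T=\{U_i\}_{i=1}^{k}$, with $U_i\subseteq F_i$ and $F_{k+1}\subseteq S:=\spann{}(T)$. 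Now Lemma \ref{up} gives $\spann{}(F_j)\subseteq\spann{}(F_{k+1})\subseteq S$ for all $j\geq k+1$, so with $R:=\bigcup_{j=k+1}^{\ell}F_j$ we have $R\subseteq S$.

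Next I would set $A_i=F_i$, $S_i=U_i$ for $1\leq i\leq k-1$, and $A_k=F_k\cup R$, $S_k=U_k\cup R$, with subspace $S$. Disjointness of the $F_i$ makes $\{A_i\}_{i=1}^{k}$ a partition of $\Phi$. For \ref{one3}: $S_i\subseteq A_i$ is immediate, $\spann{}(S_i)=\spann{}(U_i)=S$ for $i<k$, and $\spann{}(S_k)=\spann{}(U_k)+\spann{}(R)=S$ since $R\subseteq S$. For \ref{three3}: as $F_k\cap R=\emptyset$ we get $A_i\setminus S_i=F_i\setminus U_i$ for each $i$, a subset of the linearly independent set $F_i$, hence linearly independent. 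For \ref{two3}: with $J=\{\varphi:\varphi\in S\}$ one has $\spann{}(J)=S$ ($J\subseteq S$ gives one inclusion, $U_1\subseteq J$ the other), and $J$ contains the disjoint union $\bigl(\bigcup_{i=1}^{k}U_i\bigr)\cup R$ where each $U_i$ is a linearly independent subset of $F_i$ spanning $S$, so $\abs{U_i}=\dim S$, and $\abs{R}\geq\abs{F_{k+1}}\geq 1$; thus $\abs{J}\geq k\dim S+1>k\dim\spann{}(J)$.

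The only step that needs genuine care is the assertion $F_{k+1}\subseteq S$: a single transversal from Corollary \ref{transversal} need not have a span large enough to contain all of $F_{k+1}$ (nor all of $R$), which would break \ref{one3}; the point is that the union over all $\varphi\in F_{k+1}$ is still a transversal by Lemma \ref{trans lemma} (there are only finitely many candidates), and each $\varphi$ lies in the span of its own transversal, hence in the span of the union. Once $F_{k+1}\subseteq S$ is in hand, everything else is bookkeeping. I would also note the standard convention that a ``partition into $k$ linearly independent sets'' permits empty blocks, which is what makes ``$\ell>k$'' the correct reading of the hypothesis (and guarantees $F_{k+1}\neq\emptyset$).
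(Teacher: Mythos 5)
Your proposal is correct and follows essentially the same route as the paper: take a fundamental partition, lump $F_k,\dots,F_\ell$ into the last block, build a single $k$-transversal by uniting (via Lemma \ref{trans lemma}) the transversals supplied by Corollary \ref{transversal}, and let $S$ be its span. Your two small refinements — covering only $F_{k+1}$ and invoking Lemma \ref{up} for the later blocks, and explicitly putting $R=\bigcup_{j>k}F_j$ into $S_k$ so that $A_k\setminus S_k\subseteq F_k$ — are sound and in fact make the verification of (iii) cleaner than the paper's own bookkeeping.
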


\begin{proof}
As before, take a fundamental partition $F=\{F_i\}_{i=1}^{\ell}$ of $\Phi$, and consider the partition $\{A_i\}_{i=1}^k=(F_1,\ldots,F_{k-1},\{F_{k}, F_{k+1}, \ldots, F_{\ell}\})$.  We will show there exists a subspace $S$ which satisfies \ref{one3}, \ref{two3}, and \ref{three3} for this partition.  

By Corollary \ref{transversal}, for each $\varphi_i\in F_j$, $j=k+1,\ldots,\ell$, there exits a $k$-transversal, say $T_i$, of $F$ containing $\varphi_i$ in $\spann{}(T_i)$.  By Lemma \ref{trans lemma}, the set
\[
	T=\cup_{\{i:\varphi_i\in F_j, j=k+1,\ldots,l\}}T_i
\]
is a $k$-transversal of $F$ which satisfies $\varphi_i \in \spann{}(T)$ for all $\varphi_i\in F_j$.   Thus
\[
	 \spann{}(F_j) \subseteq \spann{}(T)
\]
for all $j=k+1,\ldots,\ell$.  

Finally, set $S=\spann{}(T)$ with $S_i=T\cap F_i$ for $i=1,\ldots,k-1$ and $S_k=T\cap \{F_{k}, F_{k+1}, \ldots, F_{\ell}\}$.  Then \ref{one3} and \ref{two3} follow as in Theorem \ref{gen} since $T$ is a $k$-transversal which contains in its span at least one $\varphi \in F_j$, $j>k$ (in this case all of them).  Clearly for $i=1\ldots,k-1$, $A_i\setminus S_i \subseteq F_i$ is linearly independent.  Lastly by the way we constructed our transversal,
\begin{align*}
 A_k\setminus S_k &\subseteq \{F_{k}, F_{k+1}, \ldots, F_l\}\setminus \{F_{k+1},\ldots,F_{\ell}\} \\
 &\subseteq F_k
\end{align*}
which is also linearly independent.
\end{proof}

%%%%%%%%%%%%%%%%%%%%%%%%%%%%%%%%%%%%%%%%%%%%%%%%%%%%%%%%%%%%%%%%%%%%%%%%%%%%%%%%%%%%%%%%%%%%%%%%%%%%%%%%%%%%%%%%%%%%%%%%%%%%%%%
%%%%%%%%%%%%%%%%%%%%%%%%%%%%%%%%%%%%%%%%%%%%%%%%%%%%%%%%%%%%%%%%%%%%%%%%%%%%%%%%%%%%%%%%%%%%%%%%%%%%%%%%%%%%%%%%%%%%%%%%%%%%%%%
\section{Constructing a Fundamental Partition}

We have shown how the existence of transversals within a fundamental partition leads to an elementary proof of the Rado-Horn theorem.  We will now exhibit how the Rado-Horn inequality may be used to find transversals and construct a fundamental partition.  We begin by describing subsets which maximizes the Rado-Horn inequality.

\begin{lemma} \label{max}
Given vectors $\Phi=\{\varphi_i\}_{i=1}^M$ and a fundamental partition $F=\{F_i\}_{i=1}^{\ell}$, suppose $J\subset \Phi$ maximizes ${\abs{J}}/{\dim \spann{}(J)}$.  Then $J$ is comprised of an $(\ell-1)$-transversal in $F$ together with the set $\{\varphi:\varphi\in F_{\ell},\varphi\in \spann{}(T)\}$.
\end{lemma}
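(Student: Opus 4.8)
The plan is to take an arbitrary maximiser $J$, show it must equal the collection of \emph{all} vectors of $\Phi$ lying in $S:=\spann{}(J)$, and then read off its shape inside $F$ using the transversals built in Section~2. Throughout write $d=\dim S$ and $r=\abs{J}/d$.

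First I would reduce to the case $J=\{\varphi\in\Phi:\varphi\in S\}$. Replacing $J$ by $J':=\{\varphi\in\Phi:\varphi\in S\}\supseteq J$ leaves the span equal to $S$ and cannot decrease $\abs{J}$, so it cannot decrease the ratio; maximality then forces $\abs{J'}=\abs{J}$, hence $J'=J$. Thus $J=\{\varphi\in\Phi:\varphi\in S\}$, and setting $J_i:=\{\varphi\in F_i:\varphi\in S\}$ gives a partition $J=\bigcup_{i=1}^{\ell}J_i$ into linearly independent sets. Two cheap observations: $\ell-1<r\le\ell$ (the right inequality because $F$ is an $\ell$-part partition of $\Phi$ into linearly independent sets, while $r\le\ell-1$ would let Rado--Horn partition $\Phi$ into $\ell-1$ linearly independent sets, impossible since a fundamental partition has the minimal number of parts, $\ell$, by Theorem~1 of \cite{colorings}); and every $J_i$ is nonempty, since $\abs{J_i}=\dim\spann{}(J_i)\le d$ forces $\abs{J}\le(\ell-1)d<rd$ otherwise.

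The step I expect to be the main obstacle is showing that $\{J_i\}_{i=1}^{\ell}$ is \emph{itself} a fundamental partition of $J$. It already has the minimal number of parts ($\lceil r\rceil=\ell$, all $J_i$ nonempty), so by the majorisation characterisation it remains to verify that $\{J_i\}$ majorises every ordered partition of $J$ into linearly independent sets. I would do this by a short exchange argument via Proposition~\ref{switch}: given such a partition of $J$, splice it together with the partition $\{F_i\setminus S\}$ of $\Phi\setminus S$, use exchanges to turn the result into an ordered partition of $\Phi$ into linearly independent sets, and compare with $F$. (This is exactly the sort of restriction statement produced by the analysis in \cite{colorings} and may alternatively be quoted.) Granting this, Lemma~\ref{up}, Corollary~\ref{transversal} and Lemma~\ref{trans lemma} all apply to the fundamental partition $\{J_i\}_{i=1}^{\ell}$ of $J$, whose span is $S$.

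Finally I would identify $J$. Applying Corollary~\ref{transversal} with $t=\ell-1$, $k=\ell$ together with Lemma~\ref{trans lemma}: for each $\varphi\in J_{\ell}$ there is an $(\ell-1)$-transversal of $\{J_i\}_{i=1}^{\ell-1}$ with $\varphi$ in its span, and the union over $\varphi\in J_\ell$ of these is an $(\ell-1)$-transversal $\{S_i\}_{i=1}^{\ell-1}$, $S_i\subseteq J_i$, with common span $Q\subseteq S$ and $J_{\ell}\subseteq Q$. Enlarging each $S_i$ to $F_i\cap Q$ preserves both the transversal and its span, so
\[
J\cap Q=\Big(\bigcup_{i=1}^{\ell-1}(F_i\cap Q)\Big)\cup J_{\ell},\qquad \spann{}(J\cap Q)=Q,\qquad \abs{J\cap Q}=(\ell-1)\dim Q+\abs{J_{\ell}}.
\]
If $Q\ne S$, that is $\dim Q<d$, then maximality gives $\abs{J\cap Q}\le r\dim Q$, hence $\abs{J_{\ell}}\le(r-\ell+1)\dim Q$; combining with $\abs{J}=\sum_{i=1}^{\ell}\abs{J_i}\le(\ell-1)d+\abs{J_{\ell}}$ and $r-\ell+1>0$ yields $rd=\abs{J}<(\ell-1)d+(r-\ell+1)d=rd$, a contradiction. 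Hence $Q=S$, so $S=\spann{}(S_{\ell-1})\subseteq\spann{}(J_{\ell-1})\subseteq S$ gives $\spann{}(J_{\ell-1})=S$, and Lemma~\ref{up} for $\{J_i\}$ then gives $\spann{}(J_i)=S$ for every $i\le\ell-1$. Therefore $T:=\{J_i\}_{i=1}^{\ell-1}$ is an $(\ell-1)$-transversal in $F$ with $\spann{}(T)=S$, and
\[
J=\bigcup_{i=1}^{\ell}J_i=T\cup J_{\ell}=T\cup\{\varphi:\varphi\in F_{\ell},\ \varphi\in\spann{}(T)\},
\]
which is the assertion.
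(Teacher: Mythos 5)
Your overall architecture is reasonable and several of its pieces are correct: the saturation step ($J=\{\varphi\in\Phi:\varphi\in S\}$), the bounds $\ell-1<r\le\ell$, the nonemptiness of each $J_i$, and the closing argument that forces $Q=S$ via maximality of the ratio are all sound. But there is a genuine gap exactly where you flag one: you never prove that $\{J_i\}_{i=1}^{\ell}=\{F_i\cap S\}_{i=1}^{\ell}$ is a fundamental partition of $J$, and everything downstream (applying Lemma \ref{up}, Corollary \ref{transversal} and Lemma \ref{trans lemma} to $\{J_i\}$) depends on that claim. The splicing you sketch does not work as described: for an ordered partition $\{B_j\}$ of $J$ into linearly independent sets, $B_j\cup(F_j\setminus S)$ need not be linearly independent, because $\spann{}(F_j\setminus S)$ can meet $S$ nontrivially (e.g.\ $F_j\setminus S=\{e_1+e_3,\,e_2+e_3\}$ with $S=\spann{}(e_1,e_2)$ and $e_1-e_2\in B_j$). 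Repairing this requires a projection/quotient argument in the spirit of Lemma \ref{construct}, but that lemma presupposes a transversal whose span already swallows the later blocks --- essentially the conclusion you are after --- and quoting \cite{colorings} abandons the elementary, self-contained goal. Note moreover that once $\{J_i\}$ is known to be fundamental with its first $\ell-1$ traces of size $d$, the lemma is nearly immediate, so the deferred claim is carrying most of the weight of the result.

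For comparison, the paper sidesteps this issue: it takes a fundamental partition $F'$ of $J$ itself (which exists by definition and has exactly $\ell$ parts), extracts from $F'$ an $(\ell-1)$-transversal $T$ with $\spann{}(F'_{\ell})\subseteq\spann{}(T)$ via Corollary \ref{transversal} and Lemma \ref{trans lemma}, and shows by a mediant-type inequality that $T'=T\cup F'_{\ell}$ would have strictly larger Rado--Horn ratio than $J$ unless $J=T'$. Your $Q=S$ computation is an attractive substitute for that mediant step, and your insistence that the transversal be located inside $F$ itself (rather than inside some fundamental partition of $J$) addresses a real point in the statement; but as written the argument is incomplete until the fundamental-partition claim for $\{F_i\cap S\}$ is actually established.
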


\begin{proof}
Suppose $J$ maximized the Rado-Horn inequality but did not include such an $\ell-1$ transversal.  Now $J$ cannot be partitioned into fewer than $\ell$ linearly independent sets, so consider a fundamental partition $F'=\{F'_i\}_{i=1}^{\ell}$ of $J$.  Then $F'$ contains a maximal $\ell-1$ transversal $T$ where $\spann{}(F'_{\ell})\subseteq \spann{}(T)$.  Define $T'=T\cup F'_{\ell}$. Note then
\[
	\frac{\abs{T'}}{\dim\spann{}(T')}>\ell-1>\frac{\abs{J}-\abs{T'}}{\dim\spann{}(J)-\dim\spann{}(T')}.
\]   
Then
\begin{align*}
	\abs{T'}\dim\spann{}(J)&=\abs{T'}[\dim\spann{}(T')+(\dim\spann{}(J)-\dim\spann{}(T'))]\\
												&>\abs{T'}\dim\spann{}(T')+(\abs{J}-\abs{T'})\dim\spann{}(T')\\
												&=\abs{J}\dim\spann{}(T')
\end{align*}
so $\abs{T'}/\dim\spann(T')>\abs{J}/\dim\spann{}(J)$, a contradiction.
\end{proof}

The previous lemma provides a means for finding a transversal in some unknown fundamental partition.  Next we show that given the proper transversal, if we project our vectors onto the orthogonal complement of the span of the transversal, the nonzero vectors retain their spanning and linearly indpendence properties with respect to each other.  That is, after removing all vectors in the span of the transversal and projecting, the structure of a fundamental partition for the remaining vectors, albeit unknown, is unchanged.  We formalize this claim.

\begin{lemma}\label{construct}
Consider the vectors $\Phi=\{\varphi_i\}_{i=1}^M$ and fundamental partition $F=\{F_j\}_{j=1}^{\ell}$.  For sake of notation, consider $F$ as a partition of the subscripts $\{1,\ldots,M\}$.  Let $t<\ell$ and suppose $T$ is a $t$-transversal of $F$ which satisfies $\spann{}(\{\varphi_i\}_{i\in F_{t+1}})\subseteq \spann{}(\{\varphi_i\}_{i\in T})$.  Let $P_T$ be the orthogonal projection onto $\spann{}(\{\varphi\}_{i\in T})$ and suppose $F'_j=\{i:i\in F_j,i\notin T\}$.  Then $\{F'_j\}_{j=1}^{t}$ is a fundamental partition of $\{(I-P_T)\varphi_i\}_{i\notin T}$.
\end{lemma}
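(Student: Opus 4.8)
The plan is to work throughout with index sets, as the statement does, and to carry out three stages: first record the shape of $\{F'_j\}_{j=1}^{t}$ and determine which projected vectors vanish; then check that $\{F'_j\}_{j=1}^{t}$ is an ordered partition into linearly independent sets; then deduce fundamentality from the majorization characterization of fundamental partitions recalled above.

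Stage one. Put $d:=\dim\spann(\{\varphi_i\}_{i\in T})$. Since $\spann(\{\varphi_i\}_{i\in S_j})$ does not depend on $j$ and $T=\bigcup_{j\le t}S_j$, that common span equals $\spann(\{\varphi_i\}_{i\in T})$; as each $S_j\subseteq F_j$ is independent we get $|S_j|=d$ for all $j\le t$, so $F_j\cap T=S_j$, $F'_j=F_j\setminus S_j$, and $|F'_j|=|F_j|-d$, whence $\{F'_j\}_{j=1}^{t}$ inherits the ordering of $F$. With $P_T$ the orthogonal projection onto $\spann(\{\varphi_i\}_{i\in T})$ (so $\ker(I-P_T)$ is exactly that span), one has $(I-P_T)\varphi_i=0$ precisely for $i\in T$ and for $i\in F_j$ with $j>t$ — the latter because $\spann(F_j)\subseteq\spann(F_{t+1})\subseteq\spann(\{\varphi_i\}_{i\in T})$ by Lemma \ref{up} and the hypothesis on $T$. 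Hence the nonzero members of $\{(I-P_T)\varphi_i\}_{i\notin T}$ are exactly those indexed by $\bigcup_{j\le t}F'_j$, which $\{F'_j\}_{j=1}^{t}$ partitions. I will assume each $F'_j$ ($j\le t$) is nonempty, discarding empty parts (they contain no vectors) and replacing $t$ by the number of nonempty parts otherwise.

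Stage two hinges on one exchange fact: for $R\subseteq\bigcup_{j\le t}F'_j$ and any $n\le t$, the family $\{(I-P_T)\varphi_i\}_{i\in R}$ is independent if and only if $\{\varphi_i\}_{i\in R\cup S_n}$ is. If the latter holds and $\sum_{i\in R}a_i(I-P_T)\varphi_i=0$, then $\sum_{i\in R}a_i\varphi_i\in\spann(\{\varphi_i\}_{i\in S_n})$, and since $R\cap S_n=\emptyset$ the resulting relation over $R\cup S_n$ forces all $a_i=0$; conversely, applying $I-P_T$ to a dependence over $R\cup S_n$ annihilates the $S_n$-terms, giving the $R$-coefficients $0$, after which independence of $S_n\subseteq F_n$ finishes it. Taking $R=F'_j$, $n=j$ and using $F'_j\cup S_j=F_j$ shows each $\{(I-P_T)\varphi_i\}_{i\in F'_j}$ is independent (in particular every $(I-P_T)\varphi_i$, $i\in F'_j$, is nonzero), so $\{F'_j\}_{j=1}^{t}$ is an ordered partition of the relevant vectors into linearly independent sets.

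Stage three. By the majorization characterization it suffices to show that every ordered partition $\{R_j\}_{j=1}^{k}$ of the projected vectors into independent sets — i.e. index sets $R_1,\dots,R_k$ partitioning $\bigcup_{j\le t}F'_j$ with $|R_1|\ge\cdots\ge|R_k|$ — satisfies $t\le k$ and $\sum_{j\le m}|R_j|\le\sum_{j\le m}|F'_j|$ for all $m\le t$. I will prove the partial-sum bound and then get $t\le k$ for free: if $k<t$, the $m=k$ case of the bound says $\sum_{j\le k}|R_j|\le\sum_{j\le k}|F'_j|$, yet the left side counts all the projected vectors while the right side omits the nonempty blocks $F'_{k+1},\dots,F'_t$, a contradiction. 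For the bound: by stage two each $R_j\cup S_j$ ($j\le\min(k,t)$) is independent; form the partition $\mathcal{B}$ of $\Phi$ out of the sets $R_j\cup S_j$ ($j\le\min(k,t)$), the leftover $S_j$ ($\min(k,t)<j\le t$) or $R_j$ ($t<j\le k$), and $F_{t+1},\dots,F_\ell$, each of which is independent, and let $\mathcal{B}^{\downarrow}$ be $\mathcal{B}$ reordered by decreasing size. For $m\le t$ one can pick $m$ sets of $\mathcal{B}$ of total size $\sum_{j\le m}|R_j|+md$ (with $|R_j|:=0$ for $j>k$), so $\sum_{j\le m}|\mathcal{B}^{\downarrow}_{j}|\ge\sum_{j\le m}|R_j|+md$; on the other hand $F$ fundamental gives $\sum_{j\le m}|\mathcal{B}^{\downarrow}_{j}|\le\sum_{j\le m}|F_j|=\sum_{j\le m}|F'_j|+md$, and cancelling $md$ yields the bound.

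The main obstacle I expect is the bookkeeping of stage three: the construction of $\mathcal{B}$ and the size count $\sum_{j\le m}|R_j|+md$ must be checked separately for $k\ge t$ and $k<t$, and one must notice that the minimality clause of the characterization theorem applied directly to $\mathcal{B}$ is vacuous (it only says $\ell\le|\mathcal{B}|$), so $t\le k$ has to be extracted from the partial-sum inequality instead. A secondary, essentially cosmetic, point is the degenerate case of an empty $F'_j$ — a transversal "filling up" some $F_j$ — which is why the statement is best read with empty parts removed; this does not occur when $T$ is the transversal coming from the subset maximizing the Rado–Horn ratio, as in Lemma \ref{max}.
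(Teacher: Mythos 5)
Your proof is correct and follows essentially the same route as the paper's: the heart of both arguments is the exchange observation that adjoining the transversal block $S_j$ to an independent set of projected vectors yields an independent set of original vectors, which is used first to show each projected $F'_j$ is independent and then to lift any competing partition of the projected vectors to a competing partition of $\Phi$ that contradicts the optimality of $F$. The differences are presentational only — you close the argument via the majorization characterization of fundamental partitions where the paper compares lexicographically at the first index where a competitor wins, and you make explicit the degeneracies (vanishing projections of the vectors in $F_{t+1},\dots,F_\ell$ and possibly empty parts $F'_j$) that the paper leaves implicit.
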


\begin{proof}
First note $\{(I-P_T)\varphi_i\}_{i\in F'_j}$ is linearly independent for each $j=1,\ldots,t$.  Indeed suppose there exists scalars $\{\alpha_i\}_{i\in F'_j}$ such that $\sum_{i\in F'_j} \alpha_i(I-P_T)\varphi_i=0$.  Then $\sum_{i\in F'_j}\alpha_i\varphi_i \in \spann{}(T)=\spann{}(\{\varphi_i\}_{i\in F_j\setminus F'_j})$, but $\{\varphi_i\}_{i\in F_j}$ is linearly independent.  Thus $\alpha_i=0$ for all $i\in F'_j$.

Now suppose these independent sets do not form a fundamental partition.  Then there exists some other partition of $\{i:i\notin T\}$, say $\{A_j\}_{j=1}^{s}$ such that $\{(I-P_T)\varphi_i\}_{i\in A_j}$ is linearly independent for all $j=1,\ldots,s$ and there is some $k$ such that $\abs{A_k}>\abs{F'_k}$ but $\abs{A_i}=\abs{F'_i}$ for all $i<k$.  Setting $F'_j=\emptyset$ for any $t<j\leq s$, it now suffices to show $\{\varphi_i\}_{i\in F_j\setminus F'_j \cup A_j }$ is linearly independent for $j= 1,\ldots,s$, for this would contradict that $F$ was a fundamental partition.

For scalars $\alpha_i$, consider $\sum_{i \in F_j\setminus F'_j \cup A_j }\alpha_i\varphi_i=0$. Under the projection $I-P_T$, this becomes
\[
	\sum_{i \in F_j\setminus F'_j \cup A_j }\alpha_i(I-P_T)\varphi_i= \sum_{i\in A_j}\alpha_i(I-P_T)\varphi_i=0,
\]
and $\alpha_i=0$ for $i\in A_j$.  But then
\[
	\sum_{i \in F_j\setminus F'_j \cup A_j }\alpha_i\varphi_i=\sum_{i\in F_j\setminus F'_j}\alpha_i\varphi_i=0,
\]
and $\alpha_i=0$ for all $i \in F_j\setminus F'_j \cup A_j $.
\end{proof}

We now use the lemmas in this section to construct a fundamental partition.

\begin{center}
\noindent{\bf Construction of a Fundamental Partition}
\end{center}

Let $\Phi=\{\varphi_i\}_{i=1}^M=\{\varphi_{1i}\}_{i=1}^M$ be a collection of vectors (we've added the extra index in order to track an iterative process of projections). Chose $T_1$ as a subset of the indices $\{1,\ldots,M\}$ such that of all subsets of $\Phi$, $J=\{\varphi_i\}_{i\in T_1}$ maximizes
\begin{equation}\label{rh}
	\frac{\abs{J}}{\dim \spann{}(J)}.
\end{equation}
Then by Lemma \ref{max}, $\{\varphi_i\}_{i\in T_1}$ comprises a transversal in some fundamental partition.  Let
\begin{align*}
	t_1&=\dim\spann{}(\{\varphi_i\}_{i\in T_1})\\
	k_1&=\left\lceil \abs{T_1}/t_1\right\rceil\\
	s_1&=\abs{T_1}-(k_1-1)t_1. 
\end{align*}
Then we know exactly how this transversal appears in a fundamental partition.  It is not difficult to see that we may partition $T_1$ as $\{T_{1j}\}_{j=1}^{k_1}$ where 
\begin{enumerate}[(i)]
	\item $\abs{T_{1j}}=t_1$, $j=1,\ldots,k_1-1$
	\item $\abs{T_{1j}}=s_1$, $j=k_1$
	\item $\spann{}(\{\varphi_i\}_{i\in T_{1n}})=\spann{}(\{\varphi_j\}_{j\in T_{1m}})$, $n,m\neq k_1$
	\item $\spann{}(\{\varphi_i\}_{i\in T_{1k_1}})\subseteq \spann{}(\{\varphi_i\}_{i\in T_{1j}})$, $j=1,\ldots,k_1-1$
\end{enumerate}

Let $P_{T_1}$ be the orthogonal projection of $\Phi$ onto $\spann{}(\{\varphi_{1i}\}_{i\in T_1})$. Define $\Phi_2=\{(I-P_{T_1})\varphi_{1i}\}_{i\notin T_1}=\{\varphi_{2i}\}_{i\notin T_1}$.  Now chose $T_2$ as a subset of the indices in $\{1,\ldots,M\} \setminus T_1$ such of all subsets of $\Phi_2$, $J=\{\varphi_{2i}\}_{i\in T_2}$ maximizes (\ref{rh}).  Then $\{\varphi_{2i}\}_{i\in T_2}$ comprises a transversal in a fundamental partition of $\Phi_2$.  Let
\begin{align*}
	t_2&=\dim\spann{}(\{\varphi_{2i}\}_{i\in T_2})\\
	k_2&=\left\lceil \abs{T_2}/t_2\right\rceil\\
	s_2&=\abs{T_2}-(k_2-1)t_2,
\end{align*}
and we may partition $T_2$ as $\{T_{2i}\}_{i=1}^{k_2}$ where
\begin{enumerate}[(i)]
	\item $\abs{T_{2i}}=t_2$, $i=1,\ldots,k_2-1$
	\item $\abs{T_{2i}}=s_2$, $i=k_2$
	\item $\spann{}(\{\varphi_j\}_{j\in T_{2n}})=\spann{}(\{\varphi_j\}_{j\in T_{2m}})$, $n,m\neq k_2$
	\item $\spann{}(\{\varphi_j\}_{j\in T_{2k_2}})\subseteq \spann{}(\{\varphi_j\}_{j\in T_{2i}})$, $i=1,\ldots,k_2-1$.
\end{enumerate}

We continue so that $P_{T_j}$ is the orthogonal projection of $\Phi_j$ onto $\spann{}(\{\varphi_{ji}\}_{i\in T_{j}})$. Define $\Phi_{j+1}=\{(I-P_{T_j})\varphi_{ji}\}_{i\notin T_1\cup\ldots\cup T_{j}}=\{\varphi_{(j+1)i}\}_{i\notin T_1\cup\ldots\cup T_{j}}$.  Now choose $T_{j+1}$ as a subset of the indices $\{1,\ldots,M\} \setminus \{T_1\cup\ldots\cup T_{j}\}$ such that of all subsets of $\Phi_{j+1}$, $J=\{\varphi_{(j+1)i}\}_{i\in T_{j+1}}$ maximizes (\ref{rh}).  Then $\{\varphi_{(j+1)i}\}_{i\in T_{j+1}}$ comprises a transversal in a fundamental partition of $\Phi_{j+1}$.  Letting
\begin{align*}
	t_{j+1}&=\dim\spann{}(\{\varphi_{({j+1})i}\}_{i\in T_{j+1}})\\
	k_{j+1}&=\left\lceil \abs{T_{j+1}}/t_{j+1}\right\rceil\\
	s_{j+1}&=\abs{T_{j+1}}-(k_{j+1}-1)t_{j+1}. 
\end{align*}
may partition $T_{j+1}$ as $\{T_{({j+1})i}\}_{i=1}^{k_{j+1}}$
\begin{enumerate}[(i)]
	\item $\abs{T_{({j+1})i}}=t_{j+1}$, $i=1,\ldots,k_{j+1}-1$
	\item $\abs{T_{({j+1})i}}=s_{j+1}$, $i=k_{j+1}$
	\item $\spann{}(\{\varphi_j\}_{j\in T_{({j+1})n}})=\spann{}(\{\varphi_j\}_{j\in T_{({j+1})m}})$, $n,m\neq k_{j+1}$
	\item $\spann{}(\{\varphi_j\}_{j\in T_{({j+1})k_{j+1}}})\subseteq \spann{}(\{\varphi_j\}_{j\in T_{({j+1})i}})$, $i=1,\ldots,k_{j+1}-1$
\end{enumerate}

Notice $k_j\geq k_{j+1}$.  There is a small techinical issue here in that if $k_j=k_{j+1}$, the hypotheses of Lemma \ref{construct} are not met, and we require this Lemma to gaurantee this iterative process leads to a fundamental partition.  If $k_j=k_{j+1}$, we may redefine $T_j$ as the larger transversal $T'_j=T_j \cup T_{j+1}$.  Then recalculate $t_j$ and $s_j$ ($k_j$ will remain unchanged) and continue.  The transversal becomes larger until at some point $k_j>k_{j+1}$.  An exception is when $k_j=0$, but this is not an issue since $k_j=0$ only when $\Phi_j=\emptyset$.

Suppose $r$ is such that $k_r\neq 0$ but $k_{r+1}=0$.  Finally, for $i>k_j$ adopt the convention $T_{ji}=\emptyset$.  

Then $F=\{F_i\}_{i=1}^{k_1}$ where
\[
	F_i = \cup_{j=1,\ldots,r}\{\varphi_{\ell}\}_{\ell \in T_{ji}}
\]
for $i=1,\ldots,k_1$ is a fundamental partition of $\Phi$.  

The fact that piecing together the vectors from the transversals of projections yields a fundamental partition follows imediately from Lemma \ref{construct}, where we showed such projections do not change the structure of a fundamental partition.  
See figure \ref{fig3} for an example of a fundmental partition showing values $t_i, k_i, s_i$.

%\begin{figure}[ht!] 
%   \center{\includegraphics*[scale = 0.5]{construction.eps}}
%      \caption{Fundamental partition constructed from transversals of appropriate projections}
%  	\label{fig3}
%\end{figure}

\begin{figure}[ht!] 
   \center{\includegraphics*[scale = 0.7]{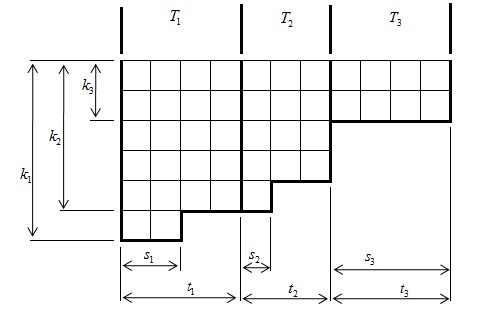}}
   \caption{Fundamental partition constructed from transversals of appropriate projections}
   \label{fig3}
\end{figure}

\begin{remark}
By constructing a fundamental partition we have essentially used the Rado-Horn inequality to described many of the spanning properties of the vectors.  For example, using the notation from the above construction, a collection of vectors $\Phi=\{\varphi_i\}_{i=1}^M$ span a $\sum_{i=1}^rt_i$-dimensional space and can be partitioned into at most $k_r$ spanning sets when $t_r=s_r$ and at most $k_r-1$ spanning sets when $t_r\neq s_r$.
\end{remark}

\end{document}